\newcommand{\intr}{\operatorname{int}}
\newcommand{\Z}{\mathbb{Z}}
\newcommand{\Mod}{\operatorname{Mod}}
\newcommand{\NS}{\mathcal{NS}}
\newcommand{\C}{\mathcal{C}}
\theoremstyle{plain}
\newtheorem{thm}{Theorem}[section]
\newtheorem{lem}[thm]{Lemma}
\newtheorem{cor}[thm]{Corollary}
\newtheorem{prop}[thm]{Proposition}
\newtheorem{claim}[thm]{Claim}
\theoremstyle{definition}
\newtheorem*{defn}{Definition}
\begin{document}

\title{Uniform hyperbolicity of the graphs of nonseparating curves via bicorn curves}
\author{Alexander J. Rasmussen}
\date{\today}

\begin{abstract}
We show that the graphs of nonseparating curves for oriented finite type surfaces are uniformly hyperbolic. Our proof follows the proof of uniform hyperbolicity of the graphs of curves for closed surfaces due to Przytycki-Sisto, while introducing new arguments using homology to certify that certain curves are nonseparating. As demonstrated by Aramayona-Valdez, this proves also that the graph of nonseparating curves for any oriented infinite type surface with finite positive genus is hyperbolic.
\end{abstract}

\maketitle

\section{Introduction}
In this paper we investigate the geometry of the graphs of nonseparating curves for oriented surfaces of finite type (i.e. with finitely generated fundamental group) and infinite type. The graph of nonseparating curves for an oriented finite type surface has been shown to be hyperbolic by Hamenst\"{a}dt \cite{ham} and a proof can also be deduced from the work of Masur-Schleimer \cite{ms}. However, in both of these proofs, it is not clear whether the constant of hyperbolicity obtained can be made independent of the topological type of the surface. Aramayona-Valdez ask in \cite{av} whether the graphs of nonseparating curves are uniformly hyperbolic (that is, whether the constant of hyperbolicity can be taken to be independent of the topological type of the surface). We prove in this paper that this is indeed the case.

\begin{thm}
\label{main}
There exists $\delta>0$ with the following property. Let $S$ be any finite-type oriented surface of positive genus. Then the graph of nonseparating curves $\NS(S)$ is $\delta$-hyperbolic and infinite diameter.
\end{thm}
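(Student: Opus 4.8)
The plan is to verify a uniform hyperbolicity criterion of guessing-geodesics type, following the strategy of Przytycki-Sisto for the full curve graph. I will exhibit, for every pair of vertices $a,b \in \NS(S)$, a path $P(a,b)$ joining them, and show that this family of paths satisfies a $1$-slim triangles condition with constants independent of the topological type of $S$. Concretely it suffices to check, with uniform constants, that (i) consecutive vertices along each $P(a,b)$ are at distance $\le 1$ in $\NS(S)$, and (ii) for any three vertices $a,b,c$, every vertex of $P(a,b)$ lies within distance $1$ of $P(a,c)\cup P(c,b)$. Once such a family is produced, the criterion forces $\NS(S)$ to be $\delta$-hyperbolic for a $\delta$ depending only on these constants.

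The paths themselves will be assembled from bicorn curves. After isotoping $a$ and $b$ into minimal position, a bicorn curve is a simple closed curve of the form $\alpha \cup \beta$, where $\alpha$ is a subarc of $a$ and $\beta$ is a subarc of $b$ meeting exactly at their shared endpoints, which are points of $a \cap b$. Ordering these bicorns according to the position along $a$ of the relevant endpoint produces a natural sequence interpolating from $a$ to $b$; one checks that consecutive bicorns can be realized disjointly, after at most a single surgery removing one intersection point, which yields property (i). The slimness property (ii) is the core of the argument: given a bicorn $\alpha\cup\beta$ in $P(a,b)$, an arc of $c$ crossing the configuration cuts it into strictly smaller subconfigurations, exhibiting $\alpha\cup\beta$ as disjoint from, or equal to, a bicorn of $(a,c)$ or of $(c,b)$.

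The main new obstacle, absent for the full curve graph $\C(S)$, is that a bicorn curve need not be \emph{nonseparating}, hence need not be a vertex of $\NS(S)$ at all. The essential point I must establish is that the entire construction can be carried out inside $\NS(S)$: starting from nonseparating $a$ and $b$, I claim the interpolating bicorns can always be chosen nonseparating, or else a separating bicorn can be replaced by a nearby nonseparating curve that remains disjoint from its neighbors in the path and does not degrade the slimness estimate. I expect this to require a homological bookkeeping argument, tracking the classes $[\alpha\cup\beta]$ in $H_1(S)$ and exploiting that $a$ and $b$ are nonseparating to guarantee a nonseparating choice of connecting arc, together with the standard change-of-coordinates facts that nonseparating curves are abundant and that $\NS(S)$ is connected in positive genus. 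The low-complexity surfaces, where there may be insufficient room to reroute, will require separate direct verification.

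Finally, the infinite-diameter statement is comparatively routine. It follows from the existence of nonseparating curves of arbitrarily large geometric intersection number together with the standard bound relating distance in $\NS(S)$ to the logarithm of intersection number, or alternatively from the (non-uniform) hyperbolicity and unboundedness already established by Hamenst\"{a}dt and by Masur-Schleimer. The uniform value of $\delta$ is then read off from the constants in properties (i) and (ii), none of which depend on the topological type of $S$.
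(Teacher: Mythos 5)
Your overall strategy coincides with the paper's: interpolate between $a$ and $b$ through bicorn curves, verify the Bowditch--Masur--Schleimer guessing-geodesics criterion with uniform constants, and handle the new difficulty that bicorns may separate by a homological argument. However, the two steps you explicitly defer are exactly where the content of the proof lies, and some of your assertions are not achievable as stated. Consecutive nonseparating bicorns cannot in general be realized disjointly, and a nonseparating replacement for a separating bicorn will not remain disjoint from its neighbors. The paper resolves this by changing the model: it defines $\NS(S)$ with edges for intersection number at most \emph{two}, proves a surgery lemma $d_{\NS'(S)}(a,b)\le 2i(a,b)+1$ (using that $[c_1]+[c_2]=[a]$ forces one surgered curve to be nonseparating) to see that this graph is uniformly quasi-isometric to the standard one, and thereafter only needs consecutive curves to intersect a bounded number of times. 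Your ``$1$-slim'' normalization must likewise be relaxed to $D$-slim for a uniform $D$: the replacement curve produced in the slimness step is disjoint from the given bicorn but meets the target bicorn up to three times, so the constant one actually obtains is on the order of $8$, not $1$.

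The more serious gap is that the claim ``the interpolating bicorns can always be chosen nonseparating, or else replaced by a nearby nonseparating curve'' is the theorem's real content, and you give no construction --- only the expectation that ``homological bookkeeping'' will suffice. Two distinct mechanisms are required. For connectivity of the family of nonseparating bicorns, when the natural successor of a bicorn $c$ is separating one must extend the $b$-arc in \emph{both} directions to obtain two candidates $c_1,c_2$, and if both separate, manufacture a third bicorn $c'$ with $[c']=[c_1]+[e_2]\neq 0$ via an auxiliary bicorn $e_2$ satisfying $[c_2]+[e_2]=[c]$. For slimness, given $c=\alpha\cup\beta$ and a third curve $d$, one first finds a nonseparating $(b,d)$-bicorn $c'$ with $b$-arc inside $\beta$ using the identity $\sum_i[c_i']=[d]$; the $(a,d)$-bicorns that $c'$ cuts off along $\alpha$ may then \emph{all} be separating, in which case no single bicorn works and one must surger $c$ along several $d$-arcs of $c'$ simultaneously (chosen with maximal, hence nested or disjoint, $a$-arcs) to build a simple nonseparating curve $c_0$ with $i(c,c_0)=0$ and $i(c_0,c')\le 3$. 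Neither construction is routine, and without them the argument does not close. You do correctly flag low genus: for genus one the paper redefines the auxiliary graph with intersection number at most one and reruns the same proof.
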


Part of the motivation for studying this problem comes from the fact that the graphs of curves are uniformly hyperbolic. The graph of curves $\C(S)$ for an oriented finite type surface $S$ is a graph acted on by the mapping class group $\Mod(S)$. The vertices of $\C(S)$ are the free homotopy classes of simple closed curves on $S$ which do not bound a disk or a once-punctured disk and the edges join pairs of homotopy classes that have disjoint representatives. Here and elsewhere in the paper, graphs are considered as geodesic metric spaces after identifying their edges with the unit interval. Studying the action of $\Mod(S)$ on $\C(S)$ has proved useful for proving results on the algebra and geometry of $\Mod(S)$. Many such results rely on the fact that $\C(S)$ is \textit{Gromov hyperbolic}. Important results proven using hyperbolicity of $\C(S)$ include \textit{quasi-isometric rigidity} of $\Mod(S)$ (\cite{rigidity}), computation of the \textit{geometric rank} of $\Mod(S)$ (\cite{rank}), and \textit{SQ-universality} of $\Mod(S)$ (\cite{dgo}). Hyperbolicity of $\C(S)$ is also a crucial ingredient in the proof of the Ending Lamination Theorem for infinite volume hyperbolic 3-manifolds, due to Minsky and Brock-Canary-Minsky (\cite{models}, \cite{elc}).

Hyperbolicity of the graphs of curves was first proved by Masur-Minsky in \cite{mm}. Later, proofs of uniform hyperbolicity were given independently by Aougab \cite{aou}, Bowditch \cite{bow}, Clay-Rafi-Schleimer \cite{crs}, and Hensel-Pzytycki-Webb \cite{hpw}. The proof from \cite{hpw} is particularly short. An even shorter proof of uniform hyperbolicity was found by Przytycki-Sisto \cite{ps} for the special case of curve graphs of \textit{closed} oriented surfaces. However, this proof does not seem to extend immediately to the case of oriented surfaces with finitely many punctures. Despite this difficulty, in this paper we show that the proof of Przytycki-Sisto \textit{can} be extended to give uniform hyperbolicity of the graphs of nonseparating curves.

Another source of motivation for proving that the graphs of nonseparating curves are uniformly hyperbolic comes from the theory of \textit{big mapping class groups}, or in other words, mapping class groups of infinite type surfaces. Big mapping class groups arise naturally in many contexts and particularly in the context of dynamics as described by Calegari on his blog \cite{blog}. Research on big mapping class groups has accelerated recently, with many researchers investigating analogies with mapping class groups of finite type surfaces. Such analogies fail in many cases: big mapping class groups are uncountable, not residually finite \cite{pv}, and not acylindrically hyperbolic \cite{acyl}.

Nonetheless, many authors have recently investigated various graphs acted on by big mapping class groups, in analogy with the curve graphs of finite type surfaces: \cite{afp}, \cite{av}, \cite{ray}, \cite{dfv}, \cite{fp}. Note that the curve graph itself for an infinite type surface always has diameter two and therefore is trivial up to quasi-isometry. In his blog post \cite{blog}, Calegari examined the mapping class group of the plane minus a Cantor set. Hoping to find a graph better suited to studying this mapping class group, he defined the ray graph of the plane minus a Cantor set. In \cite{ray}, Bavard showed that this graph is hyperbolic and infinite diameter. In \cite{afp}, Aramayona-Fossas-Parlier generalized this construction for any infinite-type oriented surface with at least one isolated puncture, showing that the resulting graph is hyperbolic and infinite diameter. In \cite{dfv}, Durham-Fanoni-Vlamis constructed certain infinite diameter hyperbolic graphs for a general family of surfaces all having infinite genus or at least one isolated puncture.

The above constructions give infinite diameter hyperbolic graphs acted on by $\Mod(S)$ for certain surfaces $S$ with infinite genus or at least one isolated puncture. Desiring to circumvent this restriction, in \cite{av} Aramayona-Valdez studied the graph of nonseparating curves for a finite genus oriented surface, establishing that this graph is infinite diameter and that it is hyperbolic if and only if the graphs of nonseparating curves for oriented finite type surfaces are uniformly hyperbolic. Therefore we have the following: 

\begin{cor}
Let $S$ be an oriented infinite type surface with finite positive genus. Then $\NS(S)$ is $\delta$-hyperbolic and infinite diameter with $\delta$ as in Theorem \ref{main}.
\end{cor}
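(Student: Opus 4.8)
The plan is to deduce the corollary directly from Theorem~\ref{main} together with the work of Aramayona--Valdez, while keeping careful track of the hyperbolicity constant. The infinite diameter assertion requires no new argument: Aramayona--Valdez prove in \cite{av} that $\NS(S)$ has infinite diameter for every finite genus infinite type oriented surface, so it remains only to establish $\delta$-hyperbolicity with the same $\delta$ produced by Theorem~\ref{main}. Throughout I will use the thin-triangles (Rips) formulation of $\delta$-hyperbolicity, since it is this formulation whose constant is transparently preserved under the exhaustion argument below.

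First I would reduce to the finite type case. Let $T$ be a geodesic triangle in $\NS(S)$ with vertices $a,b,c$; since $\NS(S)$ is connected (again by \cite{av}) each side is a finite geodesic edge-path, so $T$ meets only finitely many vertices, i.e.\ finitely many isotopy classes of nonseparating simple closed curves $\gamma_1,\dots,\gamma_n$. Choosing representatives in pairwise minimal position and, for each $\gamma_i$, an auxiliary simple closed curve meeting it in a single point (such a curve exists precisely because $\gamma_i$ is nonseparating), I would take a connected compact subsurface $S' \subseteq S$ of positive genus and finite type containing all of these curves. The presence of the transverse curves guarantees that each $\gamma_i$ meets some curve of $S'$ in a single point, hence has nonzero class in $H_1(S')$ and remains nonseparating in $S'$, so that $\gamma_1,\dots,\gamma_n$ are genuine vertices of $\NS(S')$.

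Next I would compare distances under the inclusion $\iota\colon \NS(S') \to \NS(S)$. Since disjoint curves in $S'$ remain disjoint in $S$, the map $\iota$ sends edges to edges and is therefore $1$-Lipschitz, giving $d_S \le d_{S'}$ on vertices of $\NS(S')$. On the other hand, each geodesic side of $T$ is an edge-path all of whose vertices lie in $S'$, hence is also an edge-path in $\NS(S')$ of the same length; this yields $d_{S'}(x,y) \le d_S(x,y)$ for the endpoints $x,y$ of each side, and combined with the previous inequality forces $d_{S'} = d_S$ on these pairs. Consequently each side of $T$ is a geodesic in $\NS(S')$ as well, so $T$ is a genuine geodesic triangle in $\NS(S')$. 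By Theorem~\ref{main} the graph $\NS(S')$ is $\delta$-hyperbolic, so $T$ is $\delta$-thin in $\NS(S')$; because $\iota$ does not increase distances, $T$ is $\delta$-thin in $\NS(S)$ as well. As $T$ was arbitrary, $\NS(S)$ is $\delta$-hyperbolic.

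The main obstacle is the bookkeeping in the reduction step rather than any deep geometry: one must choose $S'$ so that it simultaneously has positive genus (so that Theorem~\ref{main} applies) and keeps every $\gamma_i$ nonseparating, which is exactly what the transverse auxiliary curves arrange. I note that this exhaustion argument is essentially the content of the equivalence established in \cite{av}; one could alternatively invoke that equivalence as a black box, observing that its proof preserves the hyperbolicity constant, and read off the corollary immediately from Theorem~\ref{main}.
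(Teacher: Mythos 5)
Your argument is correct and is essentially the paper's: the paper deduces the corollary immediately from the Aramayona--Valdez equivalence in \cite{av}, whose proof of the relevant direction is exactly the exhaustion argument you spell out (and, as you note, that argument does preserve the constant $\delta$). The only point worth making explicit is that the compact positive-genus subsurface $S'$ should be chosen $\pi_1$-injective, so that geometric intersection numbers --- and hence both adjacency and non-adjacency of the vertices of $T$ --- agree when computed in $S'$ and in $S$.
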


\noindent Notice that the above Corollary applies even when $S$ has isolated punctures. The importance however is that if $S$ has no isolated punctures, then $\NS(S)$ is the first naturally defined infinite diameter hyperbolic graph known to be acted on by $\Mod(S)$.

Further work needs to be done to determine the applications of the action on $\NS(S)$ to the study of $\Mod(S)$. This action is not acylindrical since infinitely many mapping classes stabilize any pair of nonseparating curves. One natural application of the action would be to investigate the existence of nontrivial quasimorphisms on $\Mod(S)$ (that is, quasimorphisms which are not at finite distance from any homomorphism). Note that $\Mod(S)$ admits certain nontrivial quasimorphisms simply because there is a quotient homomorphism $\Mod(S)\to \Mod(\overline{S})$, where $\overline{S}$ is $S$ with the punctures filled in, and $\Mod(\overline{S})$ admits many nontrivial quasimorphisms. Studying the action of $\Mod(S)$ on $\NS(S)$ may provide examples of nontrivial quasimorphisms not arising in this way.

\subsection{About the proof}
As in the proof of uniform hyperbolicity of the graphs of curves from \cite{ps} we use a ``guessing geodesics'' Lemma of Masur-Schleimer and Bowditch. This leads us to consider \textit{bicorns} between a pair of nonseparating curves $a$ and $b$. Because we are considering the graph $\NS(S)$, we must restrict to considering only nonseparating bicorns. Surprisingly, while this restriction throws away quite a few of the bicorns between $a$ and $b$, we show that the subgraph of $\NS(S)$ spanned by the nonseparating bicorns is still connected. To do this, we follow the idea of Przytycki-Sisto of extending an arc of a bicorn to produce a new bicorn. The resulting bicorn is not generally nonseparating. However, there is some flexibility in the procedure and we use tricks involving homology to show that we can produce some nonseparating curve this way. Similarly in the proof of thinness of ``triangles'' made of bicorn curves, given a nonseparating curve on one side of a triangle we use homology to find a nonseparating bicorn in the union of the other two sides. Unlike in the Przytycki-Sisto proof, this new bicorn may intersect the original bicorn many times, and we must generally do more work to explicitly produce a path of bounded length from the original bicorn to the new one. This is done by analyzing in detail the possible patterns of intersections between the two curves and again taking advantage of homology.

We focus on the case that $S$ has genus at least two in Section \ref{2plus} and then remark on the minor modifications when $S$ has genus one in Section \ref{genus1}.

\bigskip

\noindent{\bf Acknowledgements.} I would like to thank Javier Aramayona for suggesting this problem and for discussing potential applications and further questions with me. I would like to thank my advisor, Yair Minsky, for many helpful conversations. Finally I would like to thank Yair Minsky and Katie Vokes for reading drafts of this paper. I was partially supported by the NSF grant DMS-1610827.

\section{Preliminaries}

Here we recall some basic definitions from metric geometry. Let $(X,d)$ be a geodesic metric space. For $x,y\in X$ we denote by $[x,y]$ a geodesic in $X$ from $x$ to $y$, even though such a geodesic may not be uniquely determined by its endpoints. Denote by $B_r(A)$ the closed $r$-neighborhood of a subset $A\subset X$.

\begin{defn}
Let $\delta>0$. The geodesic metric space $(X,d)$ is \emph{$\delta$-hyperbolic} if for any $x,y,z\in X$ we have \[[x,z]\subset B_\delta([x,y]\cup [y,z]).\] We say that $(X,d)$ is \emph{hyperbolic} if it is $\delta$-hyperbolic for some $\delta>0$.
\end{defn}

\begin{defn}
Let $(X,d_X)$ and $(Y,d_Y)$ be geodesic metric spaces and $C,K>0$. A map $f:X\to Y$ is a \emph{$(K,C)$ quasi-isometric embedding} if for all $x,y \in X$ \[\frac{1}{K} d_X(x,y)-C \leq d_X(f(x),f(y))\leq Kd_X(x,y)+C.\] We say that $f$ is a \emph{quasi-isometry} if in addition there exists $r>0$ such that \[Y=B_r(f(X)).\]
\end{defn}

The following result is standard:

\begin{prop} \label{qiinvariance}
Let $(X,d_X)$ and $(Y,d_Y)$ be geodesic metric spaces and $f\colon X\to Y$ a $(K,C)$-quasi-isometry. Suppose that $(X,d_X)$ is $\delta$-hyperbolic. Then $(Y,d_Y)$ is $\delta'$-hyperbolic where $\delta'$ depends only on $\delta$, the quasi-isometry constants $K,C$, and the number $r$ such that $Y=B_r(f(X))$.
\end{prop}

\section{The case of genus at least two}
\label{2plus}

\subsection{Set up}
\label{setup}

Let $S$ be an oriented finite type surface of genus at least two. A priori, $S$ may have boundary components and punctures. However, since the difference between boundary components and punctures will not be relevant for us, we will assume that $S$ has no punctures. In other words we assume that $S$ is compact. Denote by $\NS(S)$ the graph of nonseparating curves defined as follows. The vertices of $\NS(S)$ are free homotopy classes of nonseparating simple closed curves in $S$. Two vertices are joined by an edge if the corresponding homotopy classes have representatives intersecting \textit{at most twice}. We will show that $\NS(S)$ is hyperbolic with hyperbolicity constant independent of $S$.

Note that the graph of nonseparating curves is usually defined by joining vertices with an edge if they have disjoint representatives. We denote this graph by $\NS'(S)$. However, Lemma \ref{surg} below easily implies that $\NS(S)$ is quasi-isometric to $\NS'(S)$ with quasi-isometry constants independent of $S$. Hence our proof will also give uniform hyperbolicity of $\NS'(S)$ by Proposition \ref{qiinvariance}.

Let $d(\cdot,\cdot)$ denote the distance in $\NS(S)$ or in $\NS'(S)$. Let $i(\cdot,\cdot)$ denote geometric intersection number. In other words, if $a$ and $b$ are free homotopy classes of simple closed curves then $i(a,b)$ is the minimal number of points of intersection between a simple closed curve representing $a$ and a simple closed curve representing $b$. For $a$ an oriented closed curve on $S$ we denote by $[a]$ the class represented by $a$ in $H_1(S,\partial S;\Z)$.

Throughout the proofs that follow we will use the following facts:
\begin{enumerate}[(i)]
\item A simple closed curve $a$ is separating if and only if $[a]=0$ (when we give $a$ either orientation).
\item A simple closed curve $a$ is nonseparating if and only if there exists a curve $b$ such that $i(a,b)=1$.
\end{enumerate}

\begin{lem}
For $a,b\in\NS'(S)^0$, we have $d_{\NS'(S)}(a,b)\leq 2i(a,b)+1$.
\label{surg}
\end{lem}

\begin{proof}
The proof is a slight variation of a proof due to Masur-Minsky \cite{mm}.

We prove the statement by induction on $i(a,b)$. If $i(a,b)=1$ then a regular neighborhood of $a\cup b$ is a once-punctured torus. Hence the complement of this neighborhood has positive genus and therefore there is some non-separating curve $c$ of $S$ contained in the complement of $a\cup b$. Therefore actually $d(a,b)=2$ in this case.

If $i(a,b)\geq 2$ we may take $a$ and $b$ to be in minimal position. Orient $a$ and consider two points of $a\cap b$, consecutive along $b$.

\begin{figure}[h]
\begin{tabular}{c c}
\def\svgwidth{100pt}
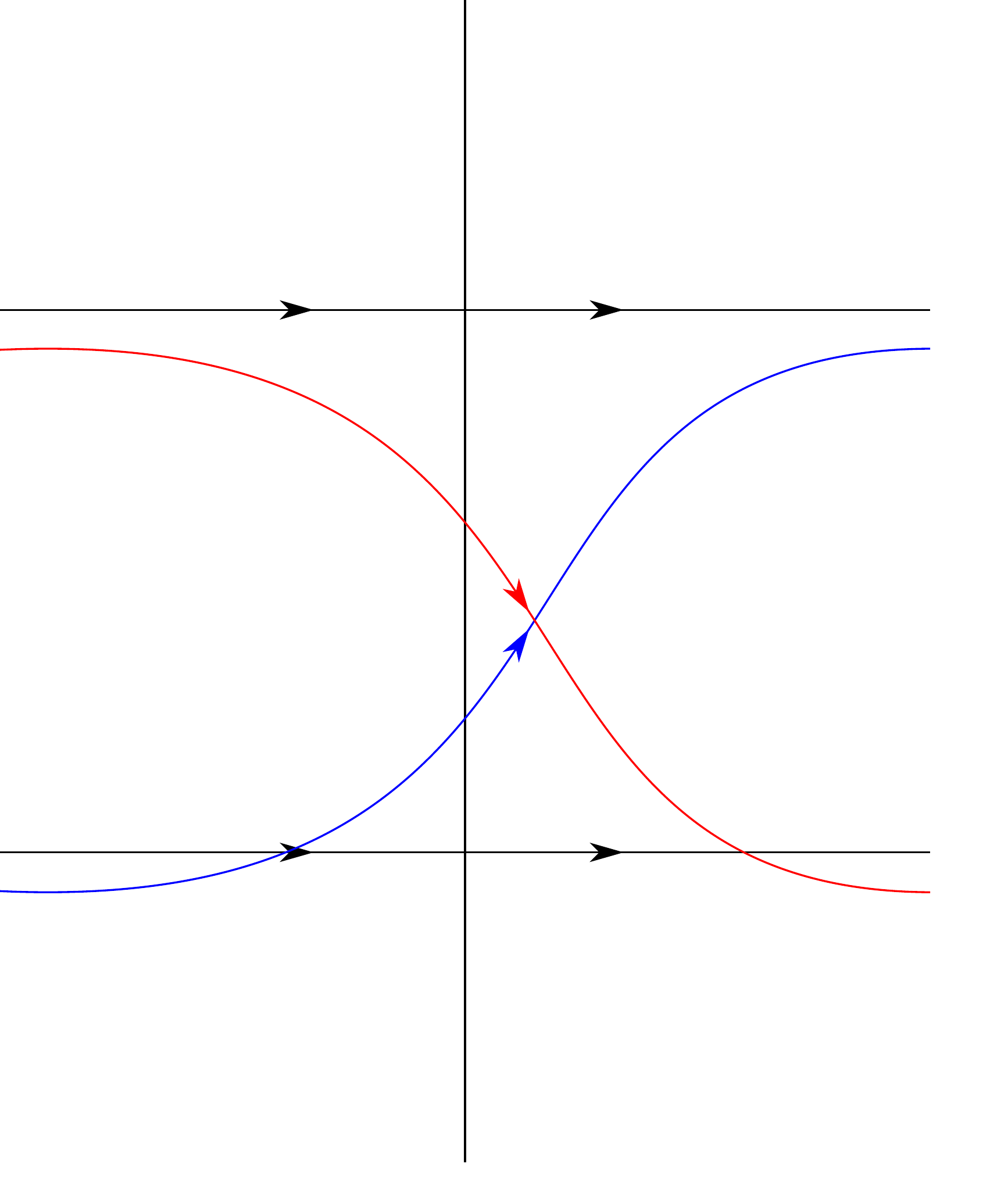 & 
\def\svgwidth{105pt}
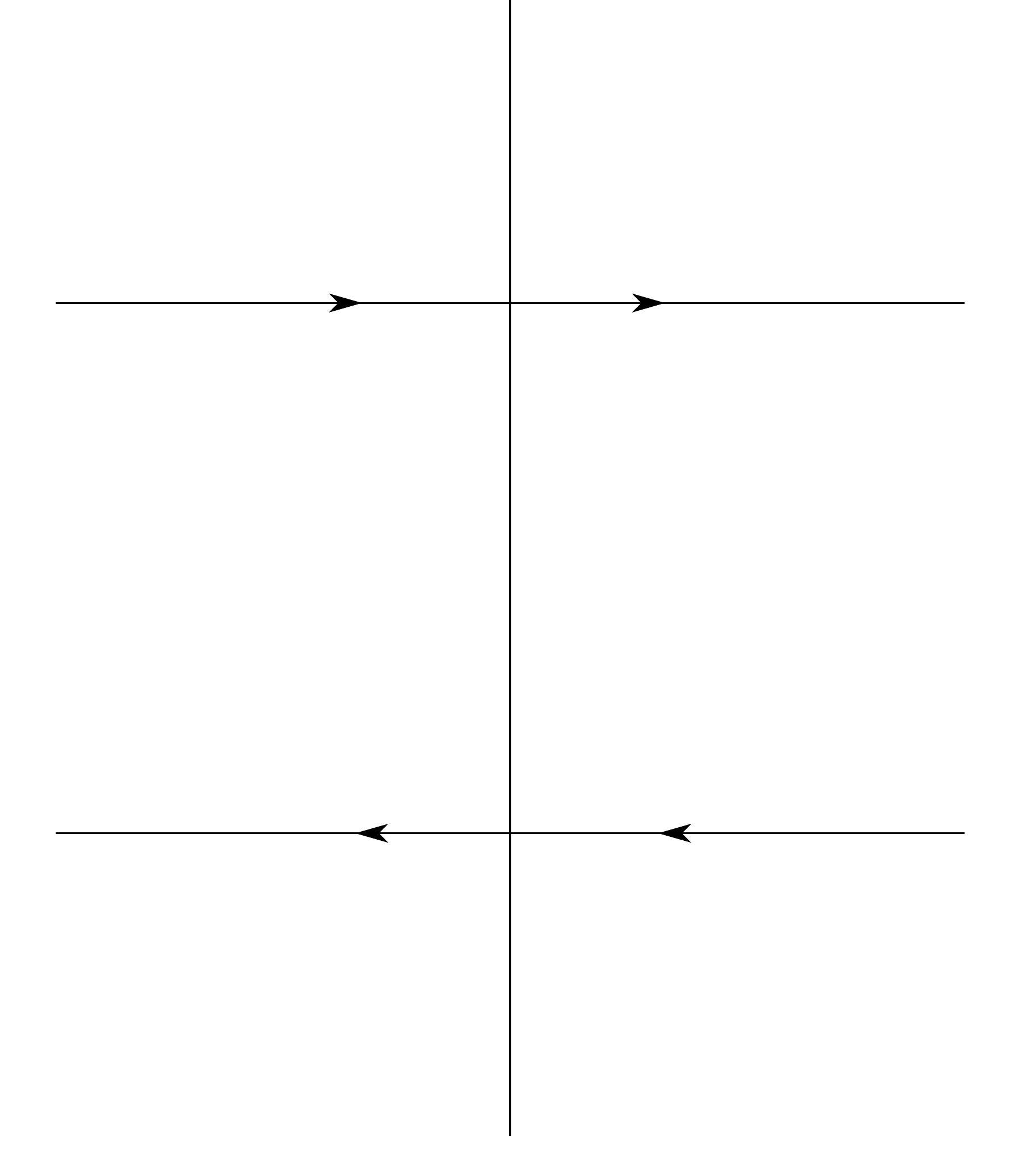 \\
\end{tabular}
\caption{}
\label{surgery}
\end{figure}

If the tangent vectors to $a$ at these intersection points are parallel along $b$ then perform surgery on $a$ with an arc of $b$ to define two oriented curves $c_1$ and $c_2$ as on the left of Figure \ref{surgery}. We have $[c_1]+[c_2]=[a]$ and hence since $[a]\neq 0$, we have $[c_1]\neq 0$ or $[c_2]\neq 0$. Hence there is a nonseparating curve $c$ with $i(c,a)=1$ and $i(c,b)\leq i(a,b)-1$. By induction we have \[d(a,b)\leq d(a,c)+d(c,b)\leq 2+2i(c,b)+1\leq 2+2i(a,b)-2+1=2i(a,b)+1.\]

If on the other hand the tangent vectors to $a$ at these intersection points are not parallel along $b$ then perform surgery on $a$ with an arc of $b$ to define two oriented curves $c_1$ and $c_2$ as on the right of Figure \ref{surgery}. Again we have $[c_1]+[c_2]=[a]$. Hence there is some nonseparating curve $c$ with $i(a,c)=0$ and $i(c,b)\leq i(a,b)-2$. Therefore we have \[d(a,b)\leq d(a,c)+d(c,b)\leq 1+2i(c,b)+1\leq 1+2i(a,b)-4+1=2i(a,b)-2.\]\end{proof}

\subsection{Proof of Theorem 1.1}
\label{proof}

\begin{figure}[h]
\begin{tabular}{c c}
\def\svgwidth{150pt}
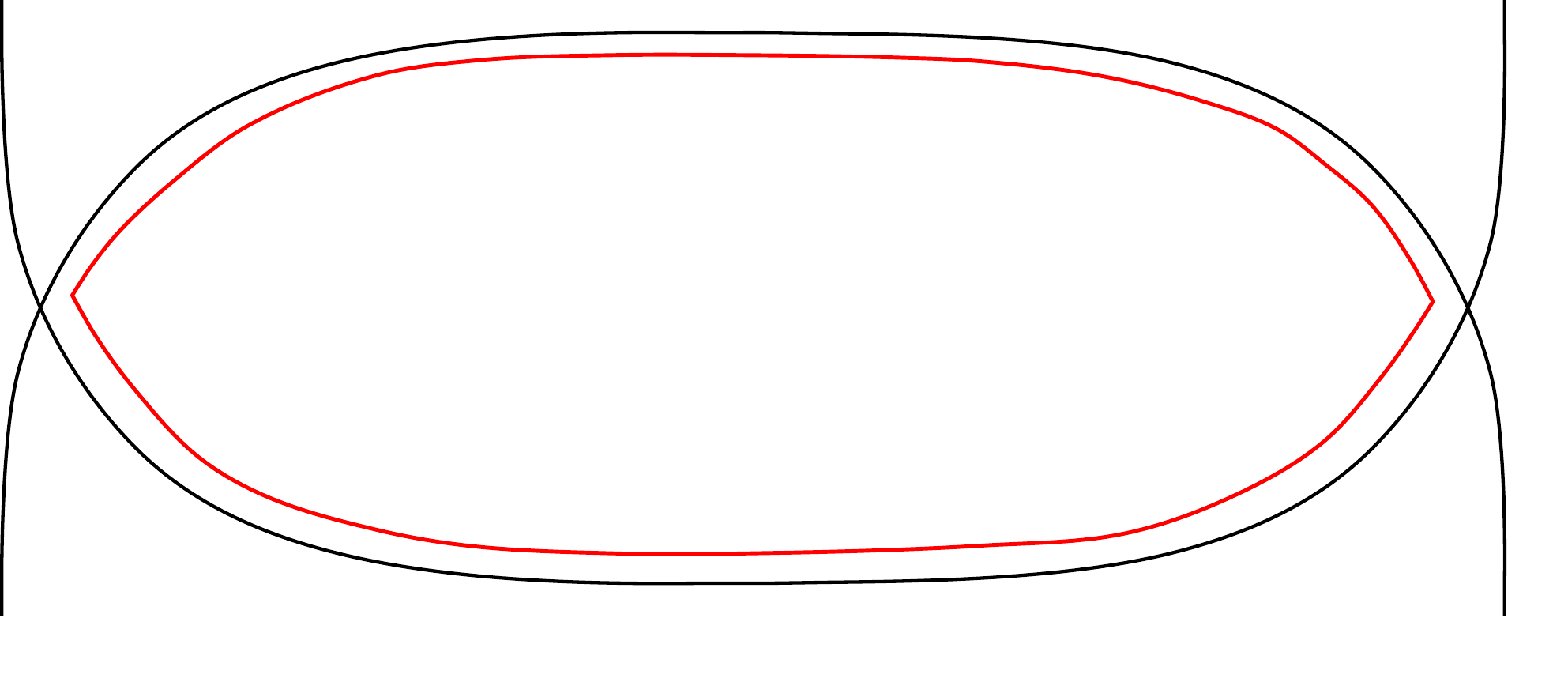 & 
\def\svgwidth{150pt}
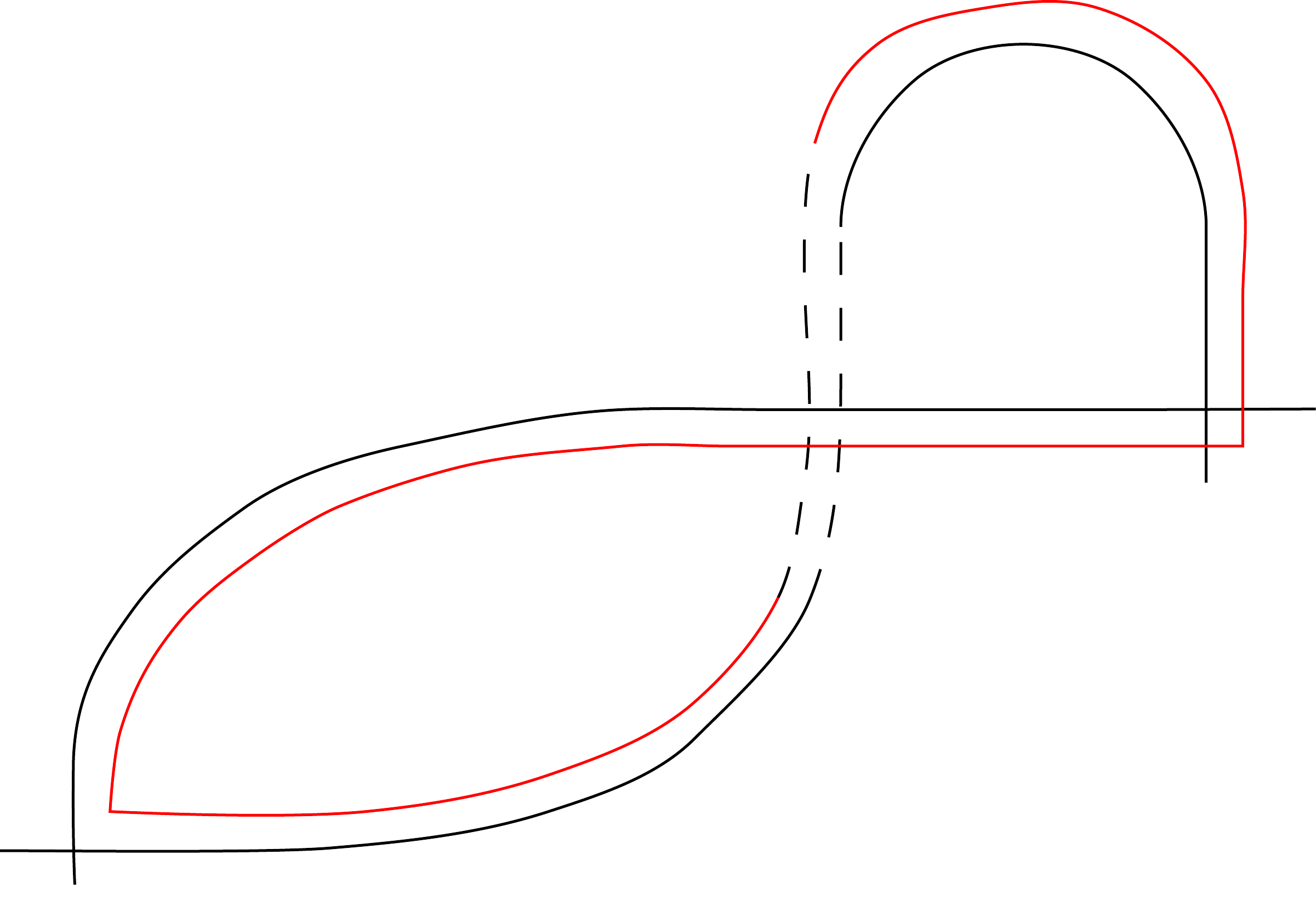 \\
\end{tabular}
\caption{The red curve $c$ is a bicorn between $a$ and $b$. Dotted lines are drawn to indicate that two curves do not intersect at a point where one of them is dotted.}
\label{bicorns}
\end{figure}

We begin the proof of hyperbolicity of $\NS(S)$. Realize all simple closed curves on $S$ mutually in minimal position (for instance by choosing a hyperbolic metric on $S$ and taking the geodesic representative of all simple closed curves). For $a,b\in \NS(S)^0$ we define a \textit{bicorn curve} between $a$ and $b$ to be a union $\alpha\cup \beta$ of an arc $\alpha$ of $a$ and an arc $\beta$ of $b$ intersecting at their endpoints $p$ and $q$ and nowhere in their interiors. We call $p$ and $q$ the \textit{corners} of the bicorn. Possible bicorns are pictured in Figure \ref{bicorns}. We also consider $a$ and $b$ themselves to be bicorns between $a$ and $b$. In the case of $a$, for instance, we set $\alpha$ to be $a$ itself and $\beta$ to be a point of $a\cap b$.

Many bicorns between $a$ and $b$ may be separating (even possibly \textit{peripheral}, i.e. bounding a once-punctured disk). Thus we define $A(a,b)$ to be the full subgraph of $\NS(S)$ spanned by \textit{nonseparating bicorn curves} between $a$ and $b$. We use the graphs $A(a,b)$ and the following criterion of Masur-Schleimer and Bowditch to prove that $\NS(S)$ is hyperbolic with hyperbolicity constant independent of $S$.

\begin{prop}[\cite{ms} Theorem 3.15, \cite{bow} Proposition 3.1]
Let $X$ be a graph and suppose that to each pair of vertices $a,b\in X^0$ we have associated a connected subgraph $A(a,b)$ containing $a$ and $b$. Suppose also that there exists $D>0$ such that

\begin{enumerate}[(i)]
\item if $a,b\in X^0$ are joined by an edge then the diameter of $A(a,b)$ is at most $D$,
\item for all $a,b,c\in X^0$, we have that $A(a,c)$ is contained in the $D$-neighborhood of $A(a,b)\cup A(b,c)$.
\end{enumerate}

Then $X$ is hyperbolic with hyperbolicity constant depending only on $D$.
\end{prop}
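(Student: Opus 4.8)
The plan is to deduce $\delta$-hyperbolicity from the Rips condition that geodesic triangles are uniformly slim, reducing everything to a single Main Lemma: there is a constant $k=k(D)$ such that for all $a,b\in X^0$ the subgraph $A(a,b)$ and any geodesic $[a,b]$ lie at Hausdorff distance at most $k$. (I read hypothesis (ii) in its intended, nonvacuous form $A(a,c)\subseteq N_D(A(a,b)\cup A(b,c))$, where $N_r(\cdot)$ denotes the $r$-neighborhood.) Granting the Main Lemma, hyperbolicity is immediate: for any $x,y,z\in X^0$ and geodesics between them,
\[[x,z]\subseteq N_k(A(x,z))\subseteq N_{k+D}(A(x,y)\cup A(y,z))\subseteq N_{2k+D}([x,y]\cup[y,z]),\]
so every geodesic triangle is $(2k+D)$-slim and $X$ is $\delta$-hyperbolic with $\delta$ a function of $D$ alone.

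The first, and easier, half of the Main Lemma is the inclusion $A(a,b)\subseteq N_k([a,b])$, which I would first obtain in the weaker logarithmic form $A(a,b)\subseteq N_h([a,b])$ with $h=O(\log d(a,b))$ by dyadic subdivision. Writing $[a,b]$ with consecutive vertices $a=w_0,\dots,w_n=b$, I would show by induction on $\lceil\log_2(j-i)\rceil$ that $A(w_i,w_j)$ lies in the $D(\lceil\log_2(j-i)\rceil+1)$-neighborhood of the subsegment $\{w_i,\dots,w_j\}$: the base case $j-i\le 1$ is hypothesis (i), and the inductive step splits at the midpoint vertex $w_m$ and applies (ii), using that $\{w_i,\dots,w_m\}$ and $\{w_m,\dots,w_j\}$ are themselves geodesic segments of $[a,b]$. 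Taking $i=0$ and $j=n$ yields the logarithmic bound.

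The main obstacle is to upgrade this to the uniform constant $k$; this is the genuine content of the criterion. The naive approach fails: feeding the midpoint decomposition into $D_n:=\sup\{d(p,[a,b]):d(a,b)\le n,\ p\in A(a,b)\}$ only gives the additive recursion $D_{2n}\le D_n+D$, which merely reproduces the logarithm. To defeat it I would argue in the style of Bowditch \cite{bow}, choosing subdivision points adaptively rather than at the midpoint and exploiting the connectivity of the $A$-sets, so that the errors accumulated across successive scales telescope geometrically instead of additively and force $D_n$ to be bounded in terms of $D$ alone. The reverse inclusion $[a,b]\subseteq N_{k'}(A(a,b))$ is then cheap: since $A(a,b)$ is connected and joins $a$ to $b$, tracking the function $p\mapsto d(a,p)-d(p,b)$ along a path in $A(a,b)$ produces, for each $w\in[a,b]$, a point $p\in A(a,b)$ with $d(a,p)\approx d(a,w)$; the now-uniform forward inclusion places $p$ near some $w'\in[a,b]$ with $d(a,w')\approx d(a,w)$, and $w,w'$ lie on the same geodesic $[a,b]$, hence are close. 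With both inclusions in hand at a uniform $k$, the slim-triangle computation of the first paragraph finishes the proof, the final $\delta$ depending only on $D$.
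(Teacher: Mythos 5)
The paper offers no proof of this proposition---it is imported wholesale from Masur--Schleimer and Bowditch---so there is nothing internal to compare your attempt against; you are trying to prove strictly more than the paper does. Your framing is the correct one: you rightly repair the typo in hypothesis (ii) (which must read $A(a,c)$ contained in the $D$-neighborhood of $A(a,b)\cup A(b,c)$), the reduction of slim triangles to the Main Lemma is valid, the dyadic-subdivision induction giving the logarithmic bound is correct, and the reverse inclusion via the intermediate-value argument on $p\mapsto d(a,p)-d(p,b)$ is a standard and sound use of the connectivity of $A(a,b)$, \emph{provided} the forward inclusion is already known with a uniform constant.

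That proviso is where the proof breaks down, at exactly the step you yourself flag as the genuine content. ``Choosing subdivision points adaptively so that the errors telescope geometrically'' is a description of the desired conclusion, not an argument. Concretely: the natural adaptive move is to cut the geodesic at two points $u,v$ flanking the closest point to the offending $p\in A(a,b)$ at distance comparable to $r=d(p,[a,b])$, so that hypothesis (ii), applied twice, places $p$ within $2D$ of $A(a,u)\cup A(u,v)\cup A(v,b)$. In the favourable case $p$ is close to $A(u,v)$, and the logarithmic bound applied to the \emph{short} subsegment $[u,v]$ (length about $4r$) yields the self-improving inequality $r\le 2D+D(\log_2(4r)+1)$, which bounds $r$ in terms of $D$ alone. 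But in the unfavourable cases $p$ is pushed onto the $A$-set of one of the two \emph{long} outer subsegments, you have gained nothing in scale while paying another additive $2D$, and controlling the total error accumulated over these recursions---showing they terminate after boundedly many returns, or that the relevant quantity genuinely decreases---is the actual theorem of Bowditch's Proposition 3.1. Your sketch contains no mechanism for this, and without it the argument delivers only $\delta=O(D\log\operatorname{diam}X)$, which is not hyperbolicity at all for the infinite-diameter graphs $\NS(S)$ to which the paper applies the criterion.
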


\begin{claim}
If $a,b\in \NS(S)^0$ are joined by an edge then $A(a,b)$ has diameter at most 2.
\end{claim}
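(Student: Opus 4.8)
The plan is to show that \emph{every} nonseparating bicorn curve between $a$ and $b$ lies within distance $1$ of the vertex $a$ in $\NS(S)$. Since $A(a,b)$ contains $a$, the triangle inequality then gives that any two vertices of $A(a,b)$ are at distance at most $2$, as desired.

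First I would dispose of the degenerate cases. As $a$ and $b$ are joined by an edge, $i(a,b)\leq 2$. If $i(a,b)\leq 1$ there are no proper bicorns between $a$ and $b$: a bicorn $\alpha\cup\beta$ with $\alpha$ a proper sub-arc of $a$ and $\beta$ a proper sub-arc of $b$ is a simple closed curve only if $\alpha$ and $\beta$ share two \emph{distinct} endpoints, and these endpoints necessarily lie in $a\cap b$. Thus when $i(a,b)\leq 1$ the only bicorns are $a$ and $b$ themselves, and the diameter of $A(a,b)$ is at most $1$.

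The main case is $i(a,b)=2$, say $a\cap b=\{p,q\}$. The points $p$ and $q$ divide $a$ into two arcs $a_1,a_2$ and divide $b$ into two arcs $b_1,b_2$, and the proper bicorns between $a$ and $b$ are exactly the four curves $a_i\cup b_j$ with $i,j\in\{1,2\}$. For such a bicorn $c=a_i\cup b_j$ I would bound $i(c,a)$ as follows. Since $a_i$ is a sub-arc of $a$, isotope $c$ by pushing $a_i$ slightly off $a$ to a fixed side; because $b_j\subset b$ and $b\cap a=\{p,q\}$, the interior of $b_j$ is disjoint from $a$. Hence after the isotopy the only intersections of $c$ with $a$ occur near the corner points $p$ and $q$, and inspecting the transverse crossing of $a$ and $b$ at each corner shows that each corner contributes at most one intersection of $c$ with $a$. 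Therefore $i(c,a)\leq 2$, so either $c=a$ or $c$ is joined to $a$ by an edge of $\NS(S)$; in either case $d(c,a)\leq 1$.

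Combining the cases, every vertex $c$ of $A(a,b)$ satisfies $d(c,a)\leq 1$, so for any $c,c'\in A(a,b)^0$ we have $d(c,c')\leq d(c,a)+d(a,c')\leq 2$. I expect the only delicate step to be the local count at the corners $p$ and $q$: one must verify, by examining the two possible transverse configurations of $a$ and $b$ at an intersection point, that pushing $a_i$ off $a$ produces at most one crossing of $c$ with $a$ near each of $p$ and $q$.
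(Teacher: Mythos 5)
Your proposal is correct and follows essentially the same route as the paper: in the case $i(a,b)=2$ you bound the intersection of each bicorn $c$ with $a$ and conclude that every vertex of $A(a,b)$ is adjacent to $a$. The only difference is that your corner-by-corner count gives $i(c,a)\leq 2$ where the paper's case analysis of the two intersection patterns gives $i(c,a)\leq 1$, but since edges of $\NS(S)$ allow intersection number up to two, your weaker bound suffices.
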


\begin{proof}
If $i(a,b)\leq 1$ then $A(a,b)$ consists of $a,b$ and the edge between them. Hence $A(a,b)$ has diameter one in this case.

\begin{figure}[h]
\begin{tabular}{c c}
\def\svgwidth{125pt}
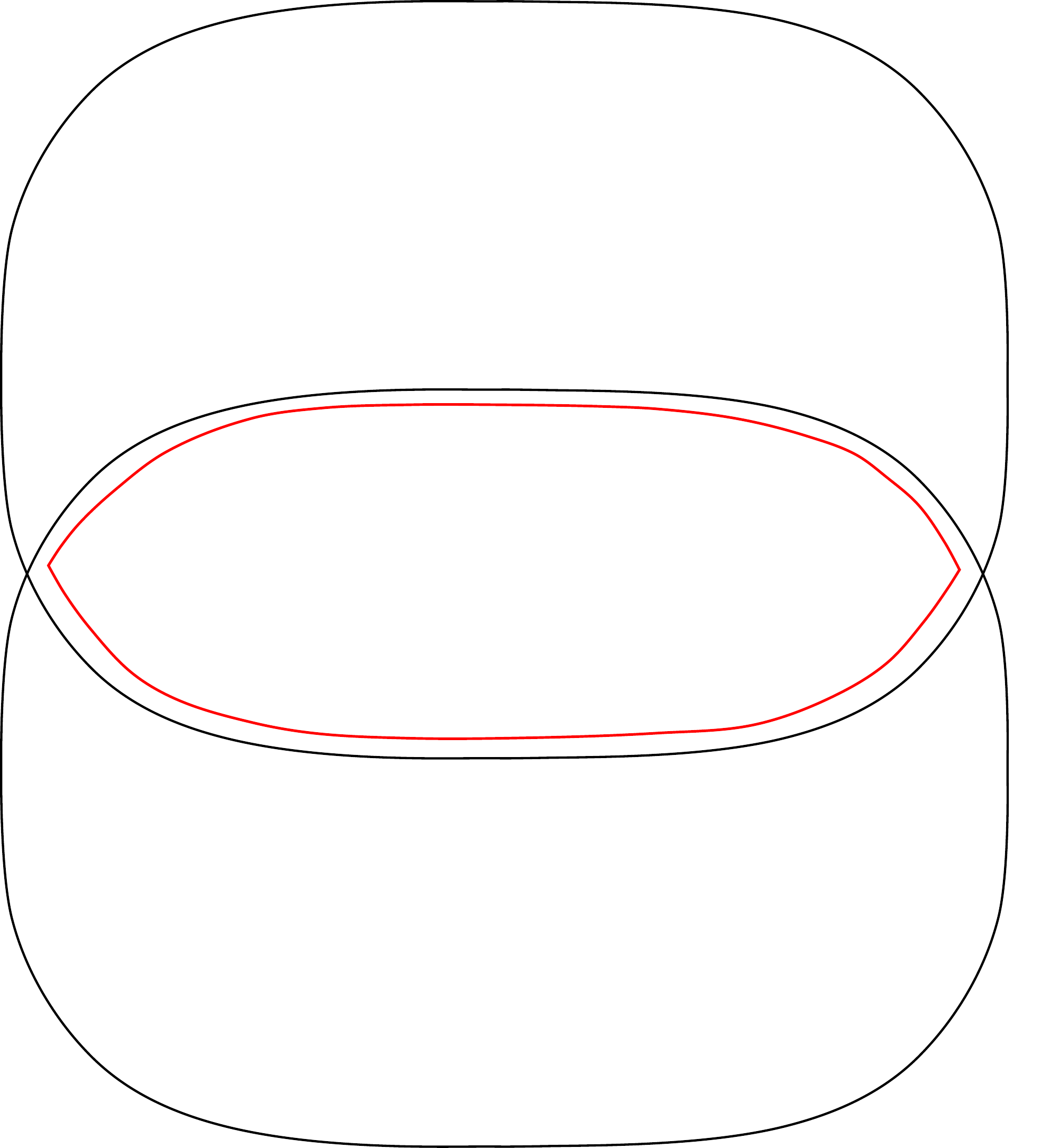 & 
\def\svgwidth{200pt}
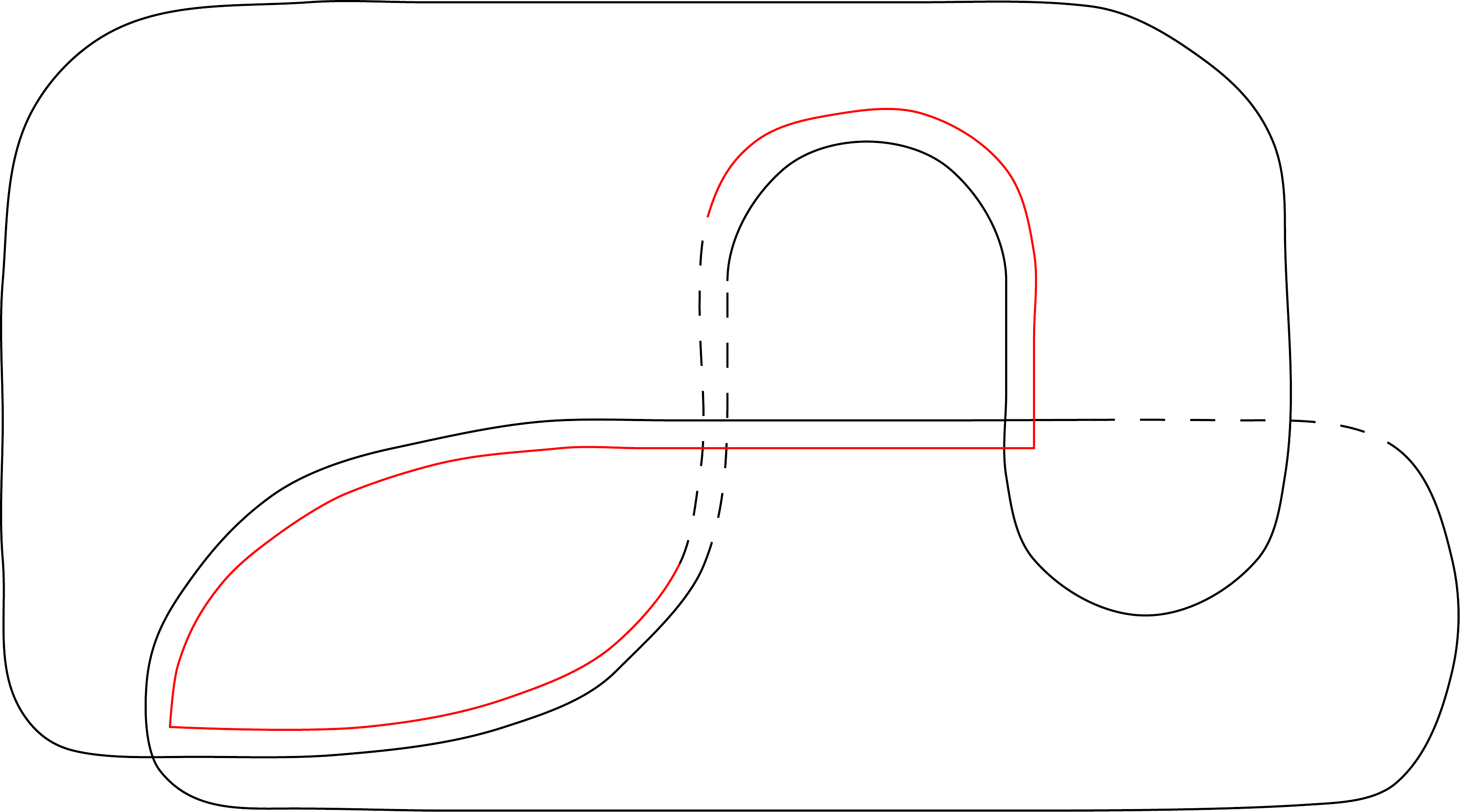 \\
\end{tabular}
\caption{If $i(a,b)=2$ then $a$ and $b$ have one of the two intersection patterns pictured above. Hence a bicorn $c$ intersects $a$ at most once.}
\label{small}
\end{figure}

If $i(a,b)=2$ then each vertex $c$ of $A(a,b)$ has $i(a,c)\leq 1$. See Figure \ref{small}.\end{proof}

\begin{claim}
$A(a,b)$ is connected.
\end{claim}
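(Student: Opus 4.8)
The plan is to prove connectivity by showing that \emph{every} vertex of $A(a,b)$ is joined to the vertex $a$ by a path inside $A(a,b)$; since $a$ is itself a nonseparating bicorn, this suffices. I would argue by induction on the complexity $n(c):=|\beta^{\circ}\cap a|$, the number of points in which the interior of the $b$-arc $\beta$ of a bicorn $c=\alpha\cup\beta$ meets $a$. For the base case $n(c)=0$, the interior of $\beta$ is disjoint from $a$ while $\alpha$ lies along $a$, so after pushing $\alpha$ slightly off of $a$ the curve $c$ meets $a$ only near its two corners. Thus $i(c,a)\le 2$, and $c$ is joined to $a$ by an edge of $\NS(S)$, hence of $A(a,b)$.

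For the inductive step, suppose $n(c)\ge 1$. Let $x$ be a corner of $c$ and let $s$ be the first point of $\beta^{\circ}\cap a$ encountered travelling along $\beta$ from $x$, so the subarc $\beta_{1}=\beta[x,s]$ has interior disjoint from $a$. The two arcs $\alpha_{1},\alpha_{2}$ of $a$ joining $x$ to $s$ satisfy $\alpha_{1}\cup\alpha_{2}=a$, and each $c_{j}=\alpha_{j}\cup\beta_{1}$ is a bicorn between $a$ and $b$ (their interiors are disjoint since $\beta_{1}^{\circ}$ misses $a\supset\alpha_{j}$, and they share exactly the endpoints $x,s$). Orienting the curves compatibly yields the homology relation $[c_{1}]+[c_{2}]=[a]$ in $H_{1}(S,\partial S;\Z)$. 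Since $a$ is nonseparating, fact (i) gives $[a]\neq 0$, so $[c_{1}]$ and $[c_{2}]$ cannot both vanish; again by fact (i), at least one $c_{j}$, say $c_{1}$, is nonseparating. As $\beta_{1}^{\circ}$ is disjoint from $a$ we have $n(c_{1})<n(c)$, so by induction $c_{1}$ is connected to $a$ within $A(a,b)$, and it remains only to check that $c$ and $c_{1}$ are adjacent.

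The main obstacle is the interplay of the two constraints on the single binary choice of arc $\alpha_{j}$: the chosen $c_{1}$ must be \emph{nonseparating} and it must satisfy $i(c,c_{1})\le 2$. The nonseparating requirement is the genuinely new ingredient relative to Przytycki--Sisto, and it is exactly what the homology computation above supplies. Bounding the intersection number is the remaining technical point: $c$ and $c_{1}$ share the arc $\beta_{1}$, and one analyzes the local picture near the two corners together with the possible recrossings of $\beta$ by the chosen arc of $a$ to conclude that, after passing to minimal position, $i(c,c_{1})\le 2$. This is precisely the place where the definition of $\NS(S)$ with edges joining curves that intersect \emph{at most twice} is needed, since the homology constraint may force us to use the ``long'' arc of $a$ rather than the one running alongside $\alpha$; a short case analysis of the corner configurations (of the type illustrated for the bicorns above) completes the verification and hence the induction.
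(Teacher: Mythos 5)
There is a genuine gap at the adjacency step, and it is not a minor technicality. In your inductive step you replace the $b$-arc $\beta$ of $c$ by its initial segment $\beta_1$ (which has interior disjoint from $a$) and close it up with one of the two arcs $\alpha_1,\alpha_2$ of $a$ joining $x$ to $s$. The homological selection of a nonseparating $c_j$ is correct and is indeed the new ingredient needed beyond Przytycki--Sisto (the paper uses the same $[c_1]+[c_2]=[a]$ trick, but only in the special case $c=a$). The problem is the claim $i(c,c_1)\le 2$. The arc $\alpha_1$ is a subarc of $a$ that may contain in its interior many of the points of $\beta^{\circ}\cap a$ --- indeed it typically contains a definite fraction of all intersections of $a$ with the $b$-arc of $c$. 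Each such point is a transverse crossing of $\alpha_1$ with $\beta$, hence of $c_1$ with $c$, and since $a$ and $b$ are in minimal position these crossings do not come from bigons between $a$ and $b$; there is no reason they can be isotoped away, and no ``local analysis near the corners'' can remove them because they occur in the interiors of both arcs. The homology constraint can force you onto exactly this bad arc, as you yourself note, and at that point the bound $i(c,c_1)\le 2$ simply fails in general. (Separately, your induction is really only two steps deep, since $n(c_1)=0$ immediately; that is harmless but shows the scheme is not exploiting any genuine decrease.)

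The paper's proof orders the bicorns in the opposite direction precisely to avoid this: instead of shrinking the $b$-arc and completing with a long arc of $a$, it \emph{grows} the $b$-arc of $c$ minimally, extending $\beta$ along $b$ only until it first meets the interior of the current $a$-arc $\alpha$. The new bicorn $c'$ then has $a$-arc contained in $\alpha$ and $b$-arc containing $\beta$, so all intersections of $c$ with $c'$ come from the short extension crossing $\alpha$ once (or twice, when one must extend in both directions), which is what gives $i(c,c')\le 2$ structurally rather than by a case analysis. The nonseparating requirement is then handled either automatically (when the two relevant signs of intersection agree, $i(c,c')=1$ forces $c'$ nonseparating) or by a homological bookkeeping argument producing a third bicorn with nonzero class. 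If you want to salvage your approach, you would need to redo the descent so that the curve you pass to shares all but a uniformly bounded amount of its arcs with $c$; as written, the step from $c$ to $c_1$ is not an edge of $\NS(S)$.
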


\begin{proof}
Note that if $i(a,b)\leq 1$ then $A(a,b)$ consists of the full subgraph of $\NS(S)$ spanned by $a$ and $b$, which is a single edge (therefore connected). Hence we may assume $i(a,b)\geq 2$.

There is a partial order defined on vertices of $A(a,b)$ as follows: $c<c'$ if the $b$-arc of $c'$ properly contains the $b$-arc of $c$. Technically this partial order is defined on representatives of bicorn curves as unions of a $b$-arc and an $a$-arc, however this will not matter for the purposes of this proof. We show that given any $c\in A(a,b)$, if $c\neq b$ then we may find $c'\in A(a,b)$ with $c<c'$ and $i(c,c')\leq 2$ so that $c$ and $c'$ are adjacent in $\NS(S)$. Since $a$ and $b$ intersect finitely many times, this will complete the proof.

First consider the case that $c=a$. In this case, choose a point $x$ of $a\cap b$. Fix an orientation of $b$ and let $y$ be the first point of $a\cap b$ after $x$ along $b$. We obtain a subarc $\beta$ of $b$ with endpoints $x$ and $y$, which intersects $a$ nowhere in its interior. The points $x$ and $y$ bound two subarcs $\alpha$ and $\alpha'$ of $a$ with disjoint interiors such that $a=\alpha\cup \alpha'$. Note that $c'_1=\alpha\cup \beta$ and $c'_2=\alpha'\cup \beta$ are bicorns and that $i(c'_i,a)\leq 1$ for $i=1,2$. Moreover, we have $[a]=[c_1']+[c_2']$ when the curves are oriented appropriately. Hence at least one of $c_1'$ or $c_2'$ is nonseparating. Choose $c'$ to be such a nonseparating bicorn and note that we have $a=c<c'$.

Otherwise we have a nonseparating bicorn $c=\alpha\cup\beta$ where $\alpha$ is a nontrivial subarc of $a$ and $\beta$ is a nontrivial subarc of $b$. In this case extend $\beta$ past one of its endpoints along $b$ until it intersects the interior of $\alpha$ at a point $z$. This gives a subarc $\beta'$ of $b$ properly containing $\beta$ and intersecting $\alpha$ exactly once in its interior. In the special case that the extended $b$-arc never intersects the interior of $\alpha$, then one may check that we have $i(c,b)\leq 1$ so that we may take $c'=b$.

Let $x,y$ be the endpoints of $\beta$ so that $x,y,$ and $z$ occur in that order along $\beta'$. The points $x$ and $z$ bound a subarc $\alpha'$ of $\alpha$ such that $\alpha'\cup\beta'=c'$ is a bicorn of $a$ with $b$. There are two cases to consider.

\begin{figure}[h]
\begin{tabular}{c c}
\def\svgwidth{200pt}
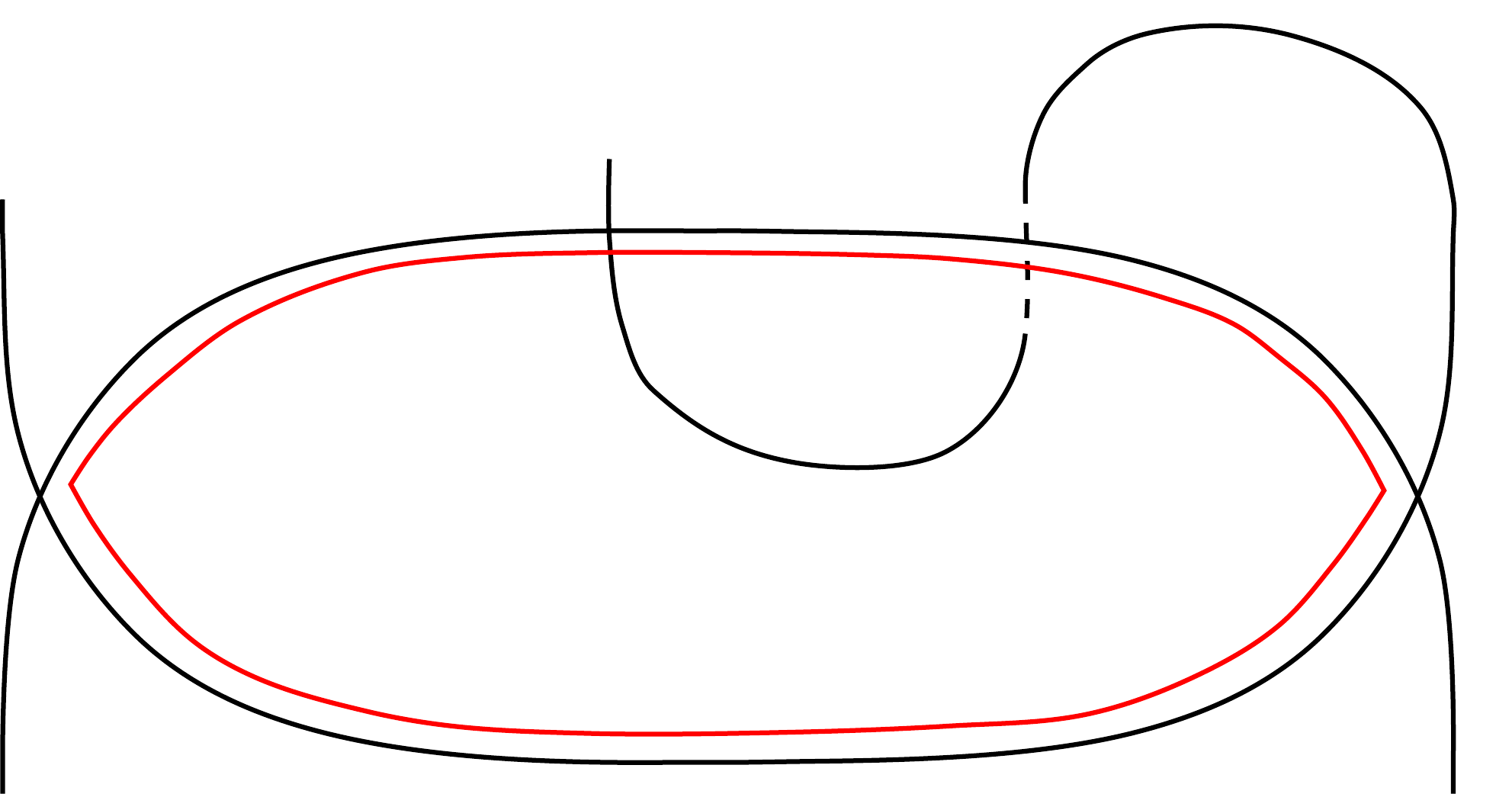 &
\def\svgwidth{200pt}
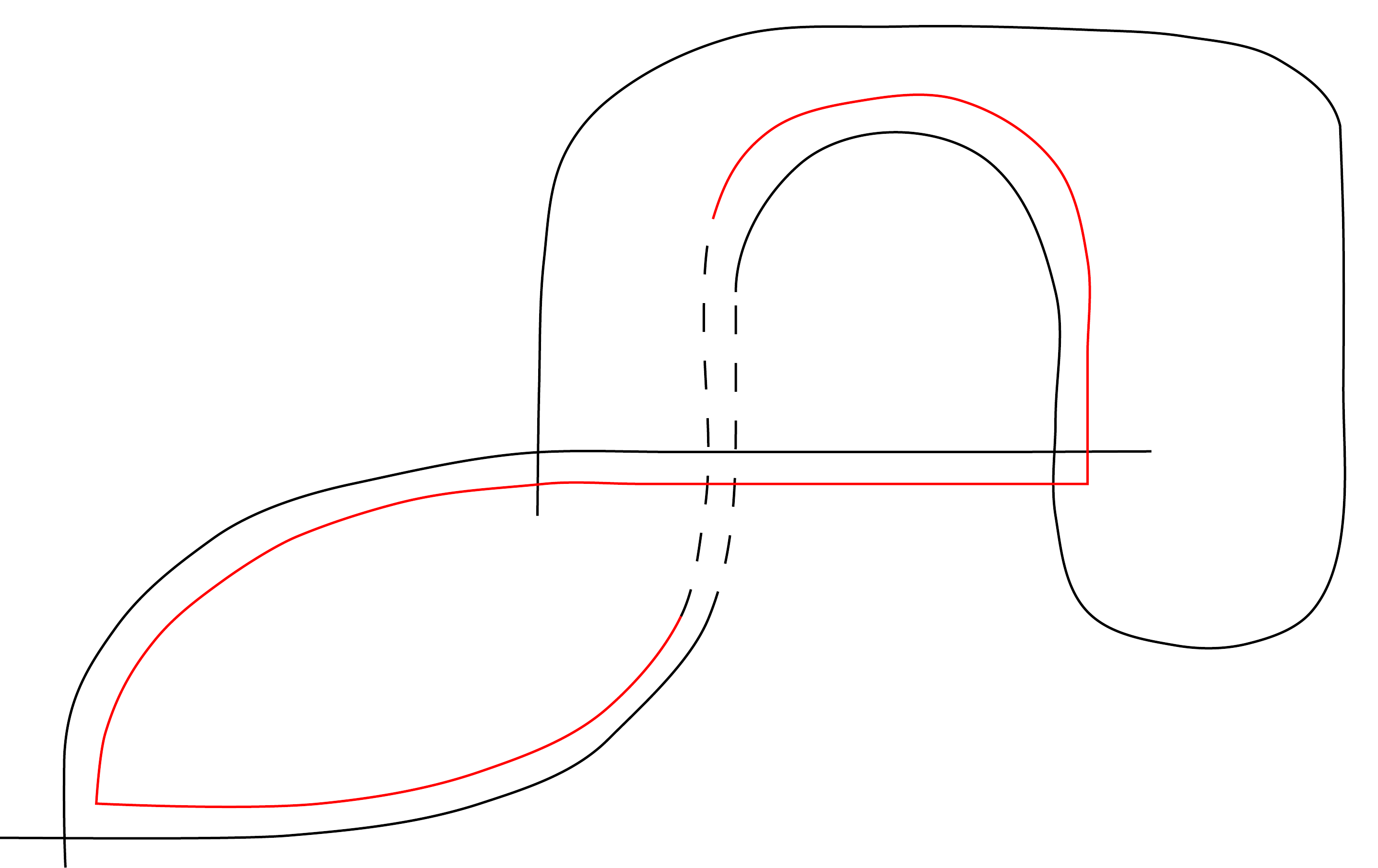 \\
\end{tabular}
\caption{If the signs of intersection of $a$ with $b$ at $y$ and $z$ are the same, then $i(c,c')=1$ and $c'$ is a nonseparating bicorn.}
\label{same_sign}
\end{figure}

First, if the signs of intersection of $a$ with $b$ at $y$ and $z$ are the same, then $i(c,c')=1$. Hence $c'$ is nonseparating so $c'\in A(a,b)$, $c$ and $c'$ are adjacent, and $c<c'$. See Figure \ref{same_sign}.

\begin{figure}[h]
\begin{tabular}{c c}
\def\svgwidth{190pt}
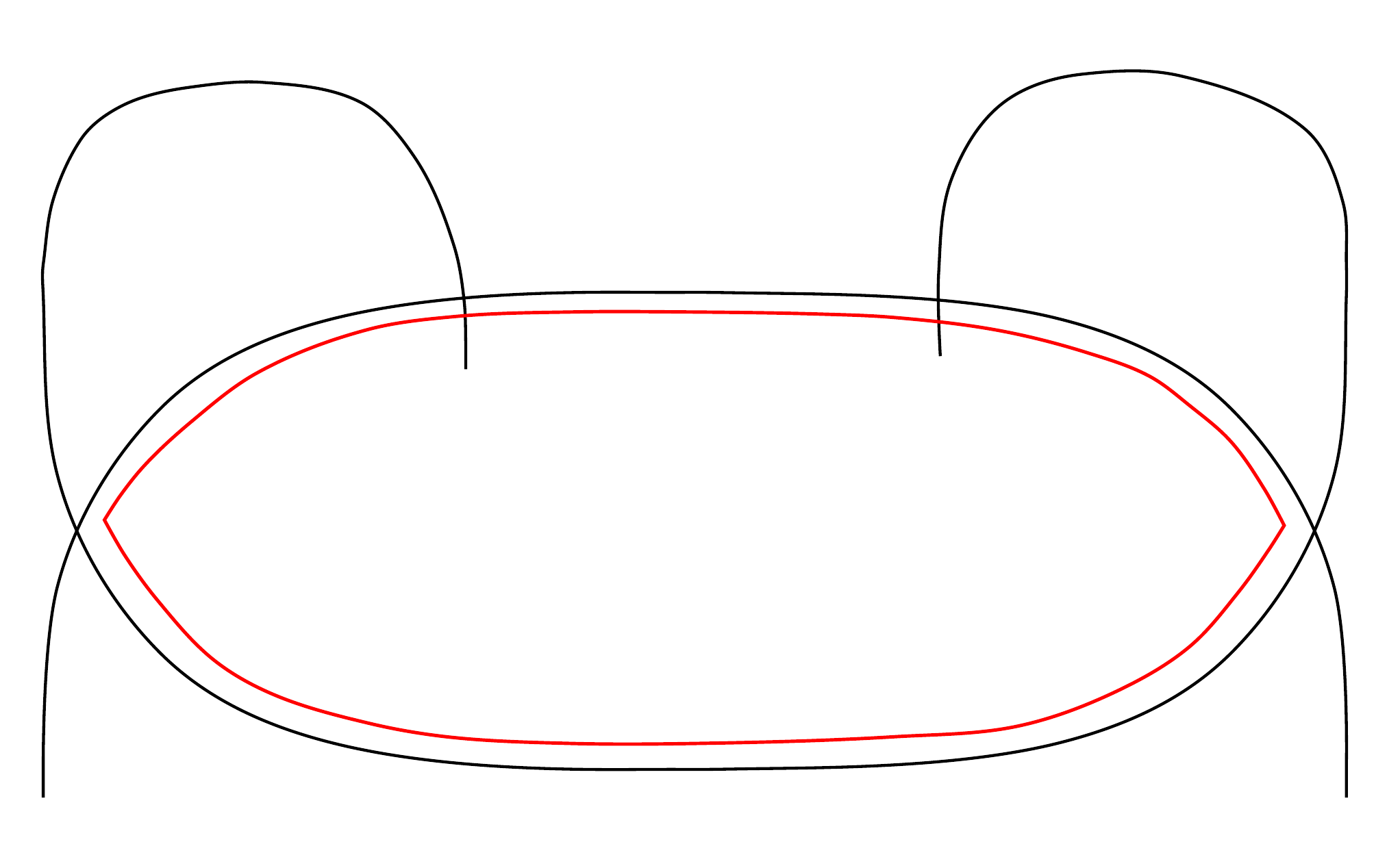 & 
\def\svgwidth{190pt}
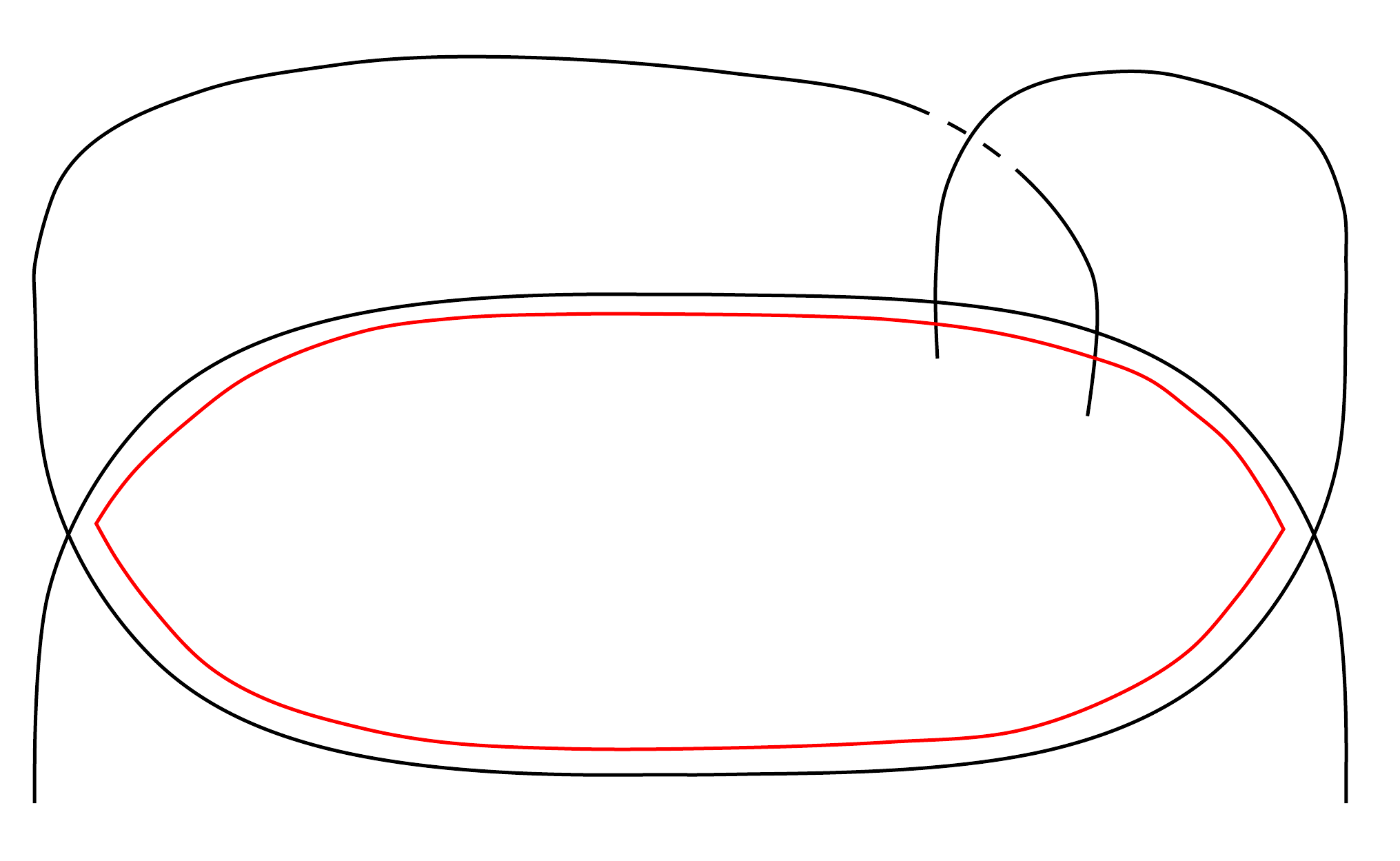 \\
\def\svgwidth{230pt}
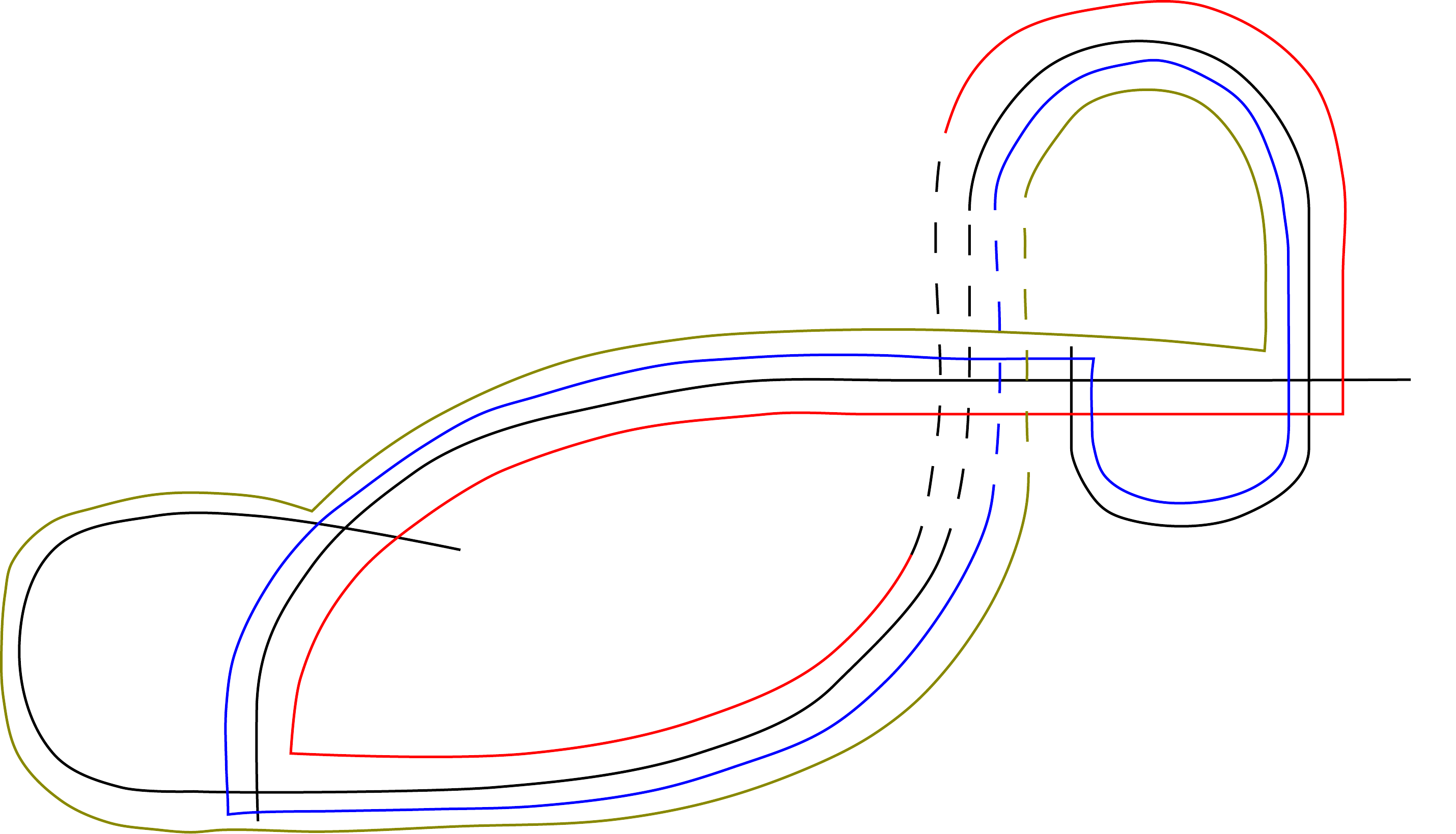 &
\def\svgwidth{230pt}
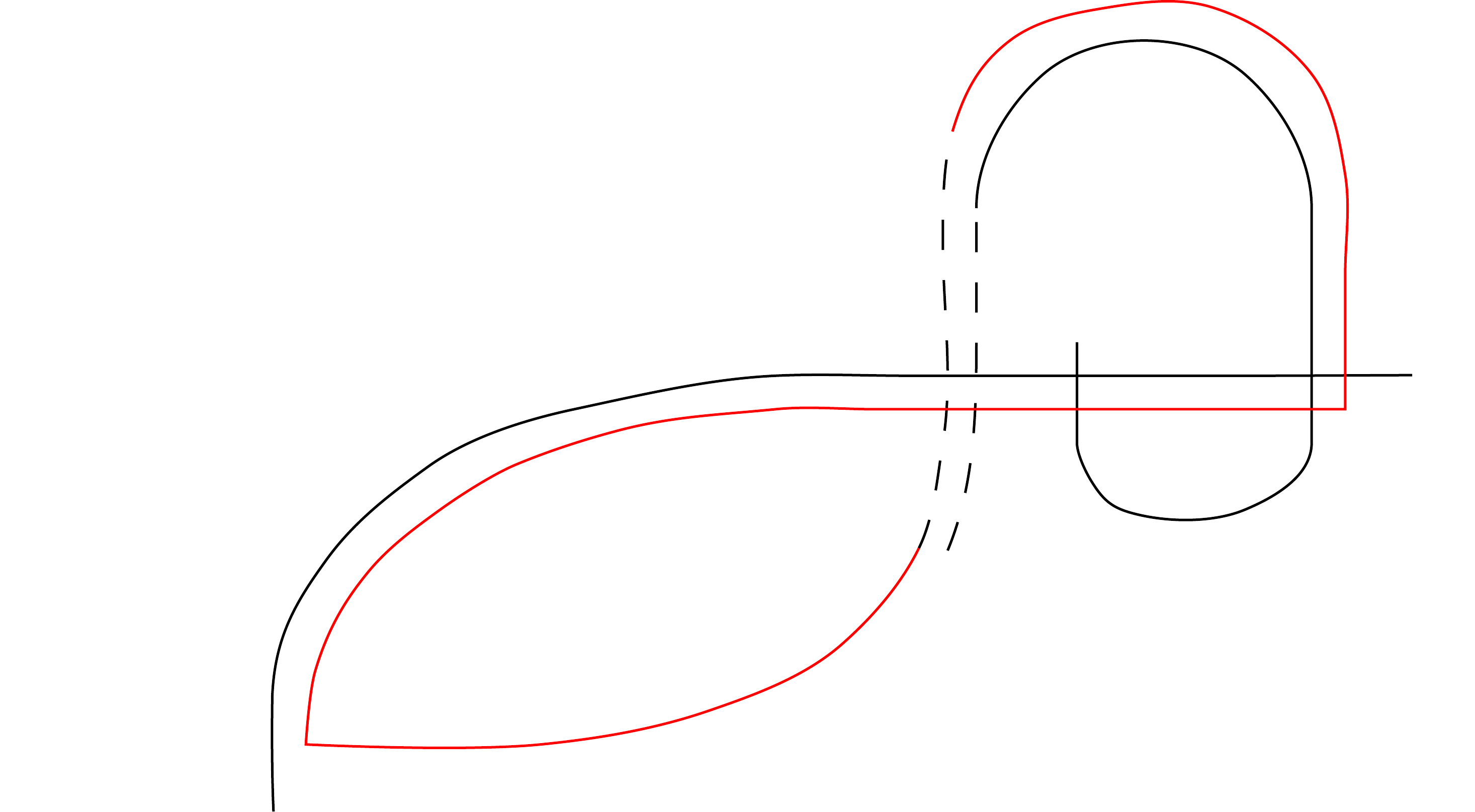 \\
\end{tabular}
\caption{}
\label{both}
\end{figure}

Therefore without loss of generality we suppose that the signs of intersection differ regardless of the direction in which we extend $\beta$. In this case we extend $\beta$ past \textit{both} of its endpoints until it intersects $\intr(\alpha)$ for the first time on each side. We thus find two bicorns of $a$ with $b$, $c_1$ and $c_2$, both with $b$-arc properly containing $\beta$. The bicorn $c_1$ has $x$ as a corner and the bicorn $c_2$ has $y$ as a corner. If $c_1$ and $c_2$ intersect $\intr(\alpha)$ in the same point then one may check that we have $i(c,b)\leq 2$ and therefore we may take $c'=b$. Otherwise we have one of the four intersection patterns pictured in Figure \ref{both}. We deal with the cases on the bottom of Figure \ref{both} explicitly. The other two cases are entirely analogous.

\begin{figure}[h]
\begin{tabular}{c}
\def\svgwidth{400pt}
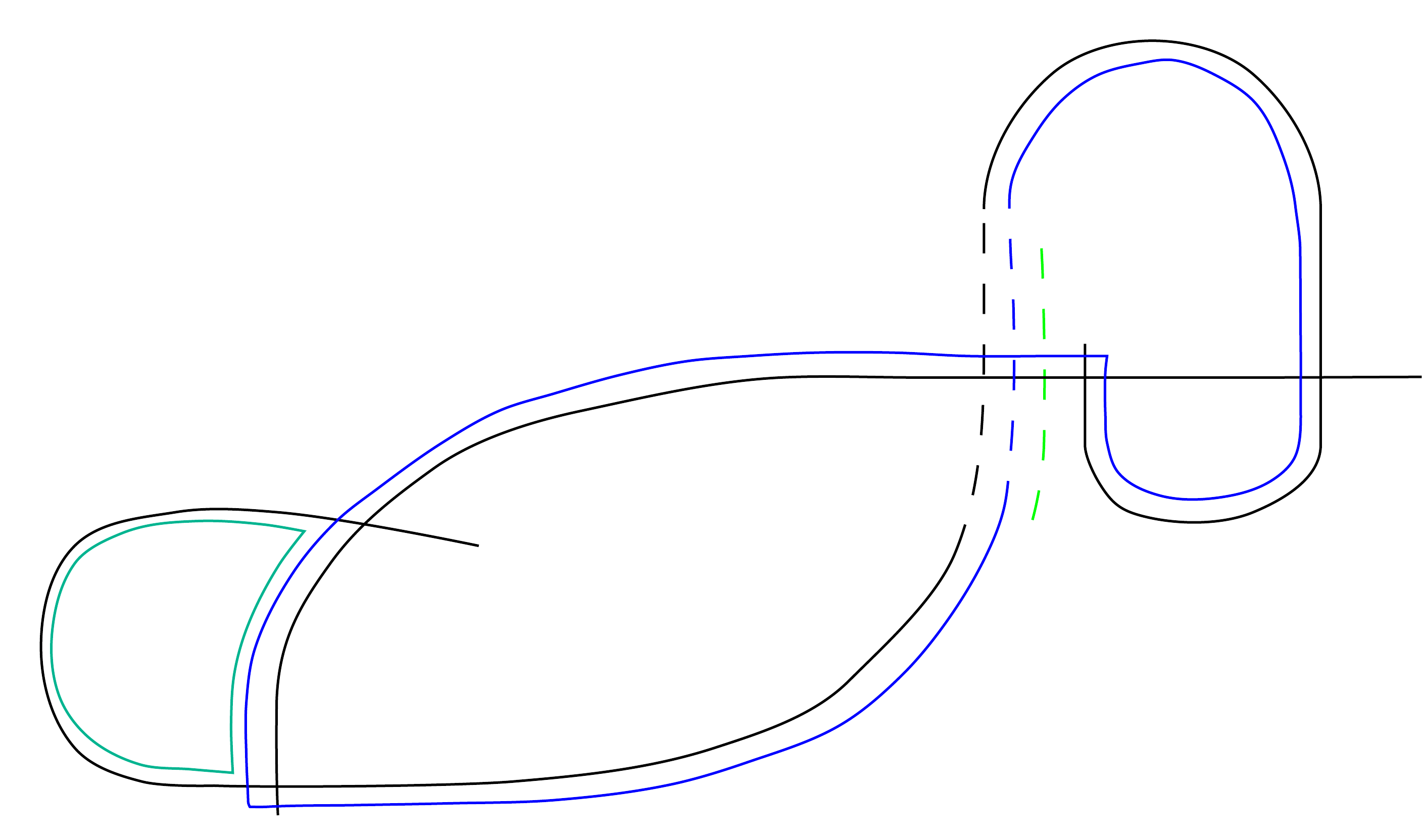 \\
\def\svgwidth{400pt}
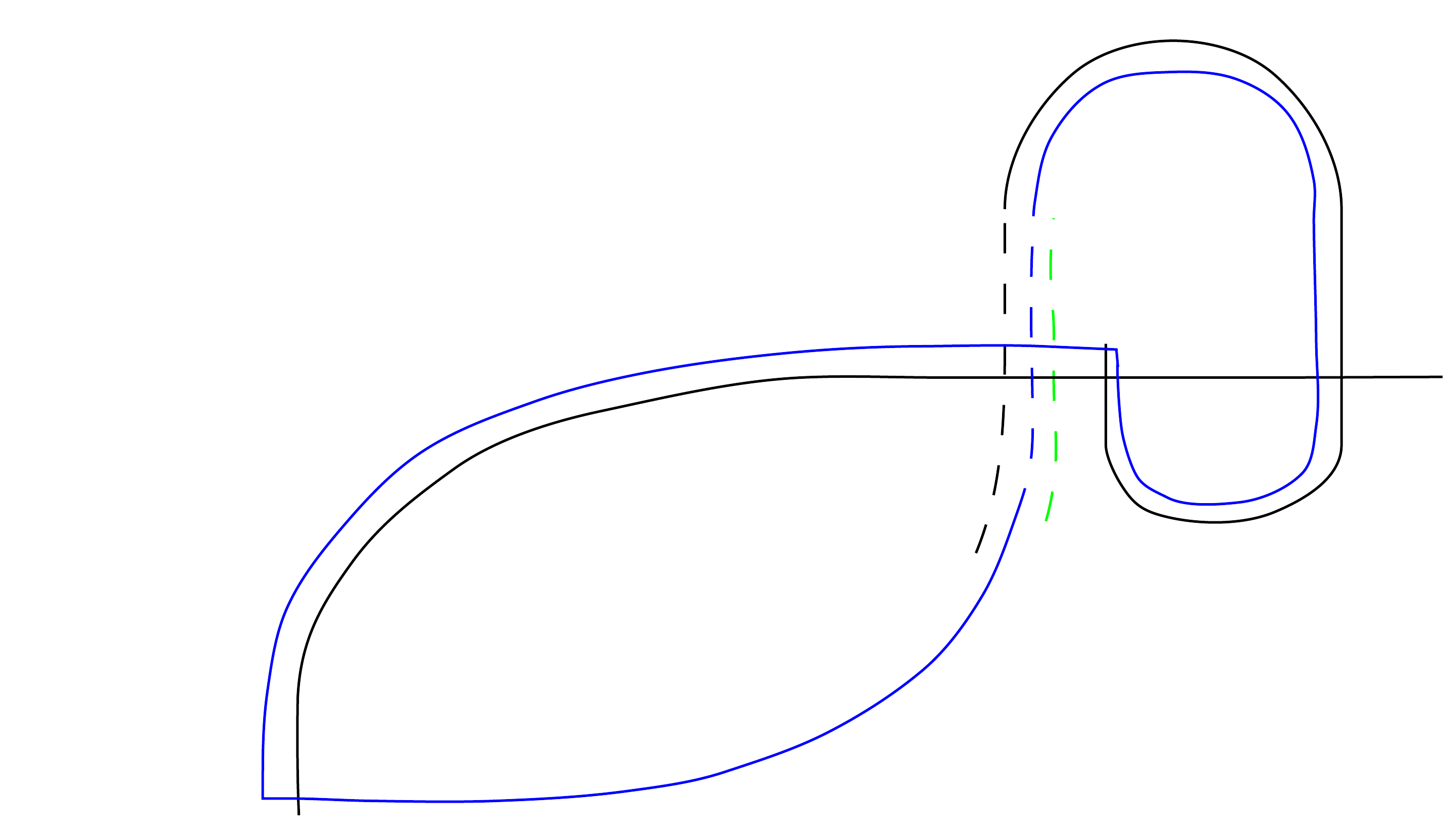 \\
\end{tabular}
\caption{If the bicorns $c_1$ and $c_2$ are both separating, then the bicorn $c'$ is nonseparating and we have $i(c,c')\leq2$.}
\label{nonsep}
\end{figure}

In the cases on the bottom of Figure \ref{both}, consider the two bicorns $c_1$ and $c_2$ defined as in the last paragraph. If $c_i$ is nonseparating for some $i$ then we may take $c'=c_i$ and therefore $c'$ is a nonseparating bicorn with $c<c'$ and $i(c,c')\leq 1$. If both $c_1$ and $c_2$ are separating, we note the following. There exists a bicorn $e_2$ between $a$ and $b$ such that $[c_2]+[e_2]=[c]$ when all three are oriented appropriately.  See Figure \ref{nonsep}. Since $c_2$ is separating and $c$ is nonseparating, this implies that $e_2$ is nonseparating. Therefore we find a third bicorn $c'$ with $[c']=[c_1]+[e_2]\neq 0$ and such that $i(c,c')\leq2$. We have $c<c'$.\end{proof}

\begin{claim}
There exists a universal constant $D>0$ with the following property. Let $a,b,d\in \NS(S)^0$. Then $A(a,b)$ is contained in the $D$-neighborhood of $A(a,d)\cup A(d,b)$.
\label{thin}
\end{claim}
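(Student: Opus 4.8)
The plan is to verify the second hypothesis of the criterion of Masur--Schleimer and Bowditch stated above by reducing it to a purely local surgery statement, in the spirit of Przytycki--Sisto. Concretely, it suffices to produce a universal constant $N$ such that every nonseparating bicorn $c$ between $a$ and $b$ admits a nonseparating bicorn $c'$ that lies between $a$ and $d$ or between $d$ and $b$, and satisfies $i(c,c')\leq N$. Indeed, granting this, Lemma \ref{surg} (together with the fact that $\NS(S)$ has at least as many edges as $\NS'(S)$) gives $d(c,c')\leq 2N+1$; and since $a$, $b$, $d$ are themselves bicorns lying in $A(a,d)\cup A(d,b)$, this places every vertex of $A(a,b)$ within $2N+1$ of $A(a,d)\cup A(d,b)$, so we may take $D=2N+1$. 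This also subsumes the easy cases $i(c,d)\leq 2$, where $c$ and $d$ are already equal or adjacent in $\NS(S)$ and $d\in A(a,d)$.

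To build $c'$, write $c=\alpha\cup\beta$ with $\alpha\subseteq a$ and $\beta\subseteq b$, and assume $i(c,d)\geq 3$. Consider the arcs into which $c$ cuts $d$, that is, the closures of the components of $d\setminus c$; each such arc $\delta$ has both endpoints on $\alpha\cup\beta$ and interior disjoint from $c$. If some $\delta$ has both endpoints on $\alpha$, then splicing $\delta$ with the subarc of $\alpha$ it cuts off yields a bicorn between $a$ and $d$ meeting $c$ only near two points, so that $i(c,c')\leq 2$; both endpoints on $\beta$ gives a bicorn between $d$ and $b$ in the same way. The remaining possibility is that every arc $\delta$ is \emph{split}, with one endpoint on $\alpha$ and one on $\beta$; then the endpoints alternate between $\alpha$ and $\beta$ as one traverses $d$, so $i(c,d)$ is even and at least $4$. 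In this case I choose three consecutive points of $c\cap d$ along $d$, lying on $\alpha$, $\beta$, $\alpha$ respectively (after possibly swapping the roles of $\alpha$ and $\beta$); the subarc $\eta$ of $d$ between the two $\alpha$-points crosses $c$ exactly once, and splicing $\eta$ with the subarc of $\alpha$ between its endpoints produces a bicorn between $a$ and $d$ with $i(c,c')\leq 3$. In every case $i(c,c')$ is bounded by a universal constant.

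It remains to arrange that $c'$ is nonseparating, so that $c'$ genuinely lies in $A(a,d)\cup A(d,b)$. As in the proofs of Lemma \ref{surg} and of connectivity of $A(a,b)$ above, each splicing comes equipped with a homologically complementary partner: the two bicorns obtained from the two subarcs of $a$ (or of $b$) determined by the relevant endpoints have classes summing to $[a]$, $[b]$, or $[c]$, which is nonzero precisely because $a$, $b$, and $c$ are nonseparating. Hence at least one member of each complementary pair is nonseparating. The main point — and the place where the passage from the closed-surface argument to the graph of nonseparating curves really happens — is to check that the nonseparating member of the pair can be taken to meet $c$ a bounded number of times, so that it serves as the desired $c'$. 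This local homological bookkeeping, substituting ``nonzero homology class'' for the complementary-genus count used by Przytycki--Sisto, is the crux of the argument and the only step where positive genus is essential; carrying it out for each configuration above (compare the surgeries of Figures \ref{same_sign}, \ref{both}, and \ref{nonsep}, including the finitely many subcases of the alternating case) completes the verification and produces a universal $D$.
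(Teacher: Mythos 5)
Your reduction is sound as far as it goes: it does suffice to produce, for each nonseparating bicorn $c=\alpha\cup\beta$ between $a$ and $b$, some nonseparating $c'\in A(a,d)\cup A(d,b)$ with $i(c,c')$ universally bounded, and your case analysis of the arcs of $d\setminus c$ does produce candidate bicorns with $i(c,c')\leq 3$. But the step you defer --- arranging that such a candidate is \emph{nonseparating} --- is not a piece of local bookkeeping that can simply be ``carried out for each configuration''; it is the entire content of the claim, and the mechanism you propose for it fails. The homologically complementary partner of a low-intersection splice is obtained by replacing the subarc of $\alpha$ (say) between the endpoints of $\delta$ by the complementary subarc of the full curve $a$; that complementary subarc contains all of $a\setminus\alpha$, so the partner can cross $\beta$ --- hence $c$ --- arbitrarily many times, and its interior can even meet $\intr(\delta)$, so it need not be a bicorn at all. (Taking the complementary pair inside $c$ instead, so that the classes sum to $[c]$, keeps the intersection with $c$ bounded but produces a curve containing a corner of $c$, which is not a vertex of $A(a,d)$ or $A(d,b)$.) So in the configuration where the bounded-intersection member of each pair is separating, your argument produces nothing.

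The paper's proof handles exactly this obstruction, and does so by a genuinely more global route. It first writes $[d]$ as the sum of the classes of \emph{all} the bicorns of $b$ with $d$ formed from consecutive intersections of $d$ with $\beta$ (a sum of $m$ terms, not a complementary pair), so that some nonseparating $c'\in A(b,d)$ has its whole $b$-arc inside $\beta$. It then surgers $c'$ along $\alpha$ into bicorns $c_i''$ of $a$ with $d$, each with $i(c,c_i'')\leq 1$; if any is nonseparating you are done. The crux is the remaining case, where every $c_i''$ is separating: there one extracts structural consequences (each $d$-arc returns to the same side of $\alpha$, and the $a$-arcs of same-side bicorns are nested or disjoint) and uses them to build an auxiliary simple nonseparating curve $c_0$ with $i(c,c_0)=0$ and $i(c_0,c')\leq 3$, showing that $c$ is uniformly close to $c'\in A(b,d)$ rather than to any $a$--$d$ bicorn. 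Nothing in your sketch anticipates this case, and it is precisely the case your complementary-pair bookkeeping cannot resolve; as written, the proposal restates the difficulty rather than overcoming it.
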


\begin{proof}
Let $c\in A(a,b)$ with $a$-arc $\alpha$ and $b$-arc $\beta$. If $d$ happens to intersect both $\alpha$ and $\beta$ at most once, then we have $i(c,d)\leq 2$ and hence $c$ is distance one from $d\in A(a,d)$. Otherwise $d$ intersects one of the sides of $c$ at least twice and we may suppose without loss of generality that it intersects $\beta$ at least twice. Orient $d$ and enumerate the intersections of $d$ with $\beta$, $x_1,x_2,\ldots,x_m$ in the order they appear along $d$.

For each $i=1,\ldots,m$, there is a unique subarc $\delta_i$ of $d$ oriented from $x_i$ to $x_{i+1}$ such that the resulting orientation of $\delta_i$ agrees with the orientation of $d$ (indices being taken modulo $m$). Moreover, $x_i$ and $x_{i+1}$ bound a unique subarc $\beta_i$ of $\beta$. The union $\beta_i\cup \delta_i$ is a bicorn $c_i'$ of $b$ with $d$. Moreover, if we give $c_i'$ the orientation induced by $\delta_i$, we have that $[d]$ is the sum of the $[c_i']$. To see this, note that \[\sum_{i=1}^m[c_i'] = \left[ \sum_{i=1}^m \delta_i\right] + \left[\sum_{i=1}^m \beta_i\right]=[d]+\left[\sum_{i=1}^m \beta_i\right]\] where we orient $\delta_i$ from $x_i$ to $x_{i+1}$ and $\beta_i$ from $x_{i+1}$ to $x_i$. Here the sum $\sum \beta_i$ is a singular 1-cycle contained in the image of $\beta$. Hence this cycle is nullhomologous and the result follows. As a result, one of these bicorns $c_i'$ is nonseparating. Setting $c'=c_i'$, we obtain a nonseparating bicorn $c'\in A(b,d)$ with $b$-arc $\beta'\subset \beta$.

\begin{figure}[h]
\centering
\def\svgwidth{\textwidth}
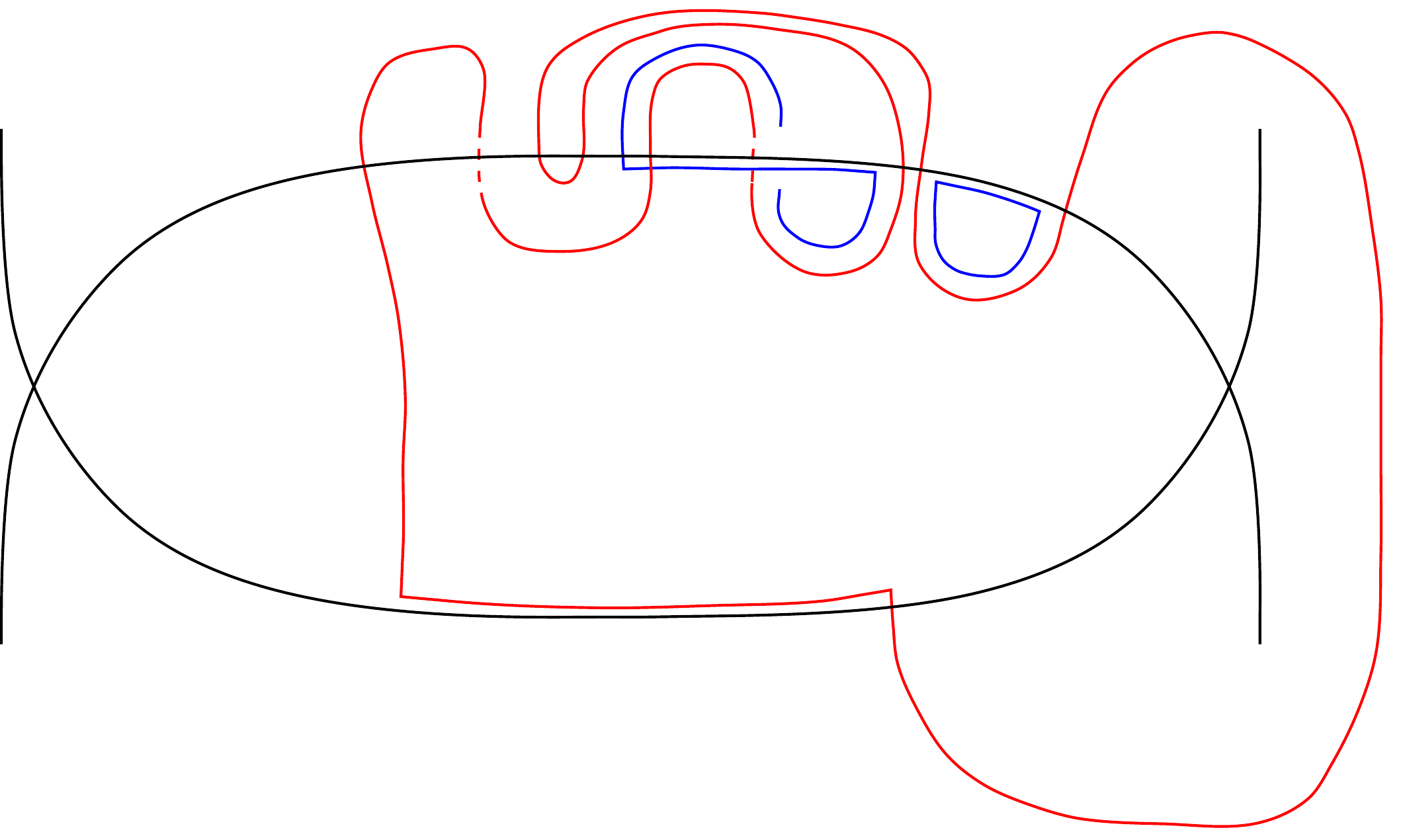
\caption{A possible picture of the bicorn $c'\in A(a,d)$. Consecutive intersections of $c'$ with $\alpha$ give bicorns $c_i''$ of $c'$ with $a$ and hence of $d$ with $a$. Two of these are shown above.}
\label{b_d_bicorn}
\end{figure}

If $c'$ happens not to intersect $\alpha$ then we have $i(c',c)\leq 1$ and $c$ is distance one from $c'\in A(b,d)$. Otherwise, starting from one of the two endpoints of $\beta'$, enumerate the points of intersection of $c'$ with $\alpha$, $y_1,y_2,\ldots,y_n$ in the order that they appear along $c'$. For $i=1,\ldots,n-1$, $y_i$ and $y_{i+1}$ bound a unique subarc $\alpha_i$ of $\alpha$. Moreover there is a unique subarc $\gamma_i$ of $c'$ from $y_i$ to $y_{i+1}$ not containing the $b$-arc $\beta'$ of $c'$. We obtain a bicorn  of $a$ with $d$, $c_i''=\alpha_i\cup\gamma_i$ for $i=1,\ldots,n-1$. See Figure \ref{b_d_bicorn}. Moreover, $c_i''$ satisfies $i(c,c_i'')\leq 1$. Hence if some $c_i''$ is nonseparating, we obtain $c''\in A(a,d)$ adjacent to $c$ and the proof is then complete.

So suppose that each $c_i''$ is separating. We claim in this case that $c$ is a uniformly bounded distance from $c'\in A(b,d)$. Orient $\alpha$. From the fact that each $c_i''$ is separating, we obtain the following:
\begin{enumerate}[(i)]
\item For each $i=1,\ldots,n-1$, the $d$-arc $\gamma_i$ either joins the left side of $\alpha$ to the left side of $\alpha$ or the right side of $\alpha$ to the right side of $\alpha$. For otherwise $c_i''$ would intersect $c$ exactly once and thus would be nonseparating.
\item For each $1\leq i< j\leq n-1$, if the $d$-arcs of $c_i''$ and $c_j''$, namely $\gamma_i$ and $\gamma_j$, both join the left side of $\alpha$ to the left side of $\alpha$, then the $a$-arcs of $c_i''$ and $c_j''$, namely, $\alpha_i$ and $\alpha_j$, are either nested or disjoint. For suppose that $\gamma_i$ has endpoints $x,z$ and $\gamma_j$ has endpoints $y,w$ with $x<y<z<w$ in the orientation of $\alpha$. Then the bicorns $c_i''$ and $c_j''$ have $i(c_i'',c_j'')=1$. This contradicts that $c_i''$ and $c_j''$ are separating.
\end{enumerate} 

\begin{figure}[h]
\centering
\def\svgwidth{\textwidth}
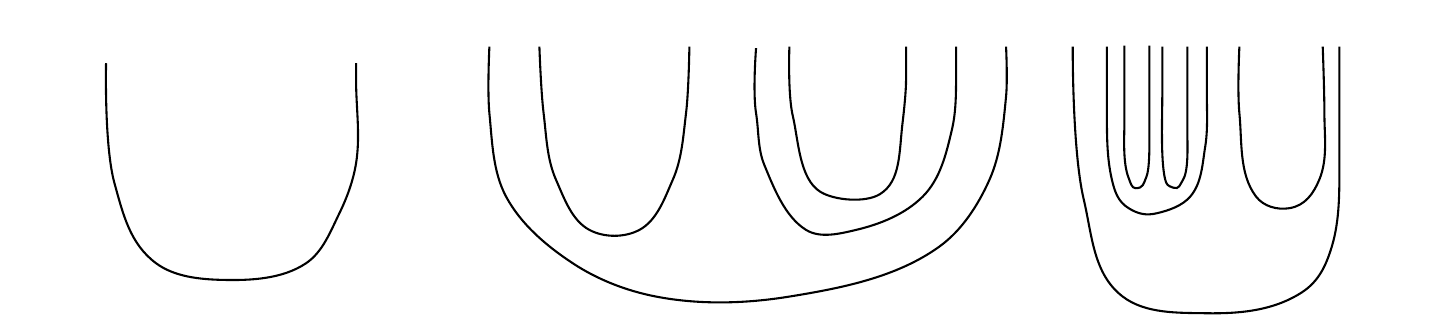
\caption{A picture of the modified curve $c_0$ along with one of the separating bicorns $c_i'''$. These bicorns are chosen to have the property that their $\alpha$-arcs are maximal with respect to inclusion.}
\label{modified}
\end{figure}

Now enumerate all of the $d$-arcs $\gamma_j$ which join the left side of $\alpha$ to the left side of $\alpha$ and such that the corresponding $a$-arcs $\alpha_j$ are maximal with respect to inclusion: $\gamma_1',\ldots,\gamma_r'$. Let $\alpha_1',\ldots,\alpha_r'$ be the corresponding $a$-arcs and $c_1'''=\gamma_1'\cup\alpha_1',\ldots,c_r'''=\gamma_r'\cup\alpha_r'$ be the resulting bicorns of $a$ with $d$. For each $i=1,\ldots,r$ replace the subarc $\alpha_i'$ of $c$ with $\gamma_i'$. Call the resulting curve $c_0$. See Figure \ref{modified}. From the construction one finds that $c_0$ is simple. As the homology class $[c_0]$ is the sum of $[c]$ with the classes $[c_1''']=\ldots=[c_r''']=0$, we have $[c_0]\neq 0$ so that $c_0$ is nonseparating. We finally claim that $i(c,c_0)=0$ and that $i(c_0,c')\leq 3$. This will finish the proof.

The new curve $c_0$ consists of the $b$-arc $\beta$ along with several subarcs of $\alpha$ and of $d$. The $d$-arcs of $c_0$ join the left side of $\alpha$ to the left side of $\alpha$ and intersect $\alpha$ nowhere in their interiors. By homotoping the $\alpha$-arcs and the $\beta$-arc of $c_0$ slightly to the left of $c$ we see that $i(c,c_0)=0$ (see Figure \ref{modified}). To see that $i(c_0,c')\leq 3$, first homotope the $b$-arc $\beta'$ of $c'$ slightly off of $\beta$ so as to create no bigons between $c_0$ and $c'$. Then, homotope the $d$-arcs of $c_0$ slightly off of themselves within a regular neighborhood of $d$, moving their endpoints slightly into the interiors of the adjacent $\alpha$-arcs of $c_0$. At this point, the homotoped $b$-arc of $c'$ accounts for at most one point of intersection of $c'$ with $c_0$. Any other intersections of $c_0$ with $c'$ must occur between the $d$-arc of $c'$ and one of the $\alpha$-subarcs of $c_0$. By definition, such a point of intersection is some $y_i$. However, each $y_i$ other than possibly $y_1$ or $y_n$ lies on both of the arcs $\gamma_i$ and $\gamma_{i-1}$ and one of these arcs joins the left side of $\alpha$ to the left side. In particular, if $i\neq 1,n$, then $y_i$ lies on one of the maximal $a$-arcs $\alpha'_j$, which has been removed to define $c_0$. So the only points of intersection of $c'$ with $c_0$, besides the one already mentioned, are $y_1$ and/or $y_n$. This proves that $i(c,c_0)\leq 3$, as claimed. See Figure \ref{c_c'_intersection} for an example of the possible intersection pattern of $c_0$ and $c'$.

\begin{figure}[h]
\centering
\def\svgwidth{350pt}
\begingroup%
  \makeatletter%
  \providecommand\color[2][]{%
    \errmessage{(Inkscape) Color is used for the text in Inkscape, but the package 'color.sty' is not loaded}%
    \renewcommand\color[2][]{}%
  }%
  \providecommand\transparent[1]{%
    \errmessage{(Inkscape) Transparency is used (non-zero) for the text in Inkscape, but the package 'transparent.sty' is not loaded}%
    \renewcommand\transparent[1]{}%
  }%
  \providecommand\rotatebox[2]{#2}%
  \newcommand*\fsize{\dimexpr\f@size pt\relax}%
  \newcommand*\lineheight[1]{\fontsize{\fsize}{#1\fsize}\selectfont}%
  \ifx\svgwidth\undefined%
    \setlength{\unitlength}{951.78141917bp}%
    \ifx\svgscale\undefined%
      \relax%
    \else%
      \setlength{\unitlength}{\unitlength * \real{\svgscale}}%
    \fi%
  \else%
    \setlength{\unitlength}{\svgwidth}%
  \fi%
  \global\let\svgwidth\undefined%
  \global\let\svgscale\undefined%
  \makeatother%
  \begin{picture}(1,0.55049883)%
    \lineheight{1}%
    \setlength\tabcolsep{0pt}%
    \put(0,0){\includegraphics[width=\unitlength,page=1]{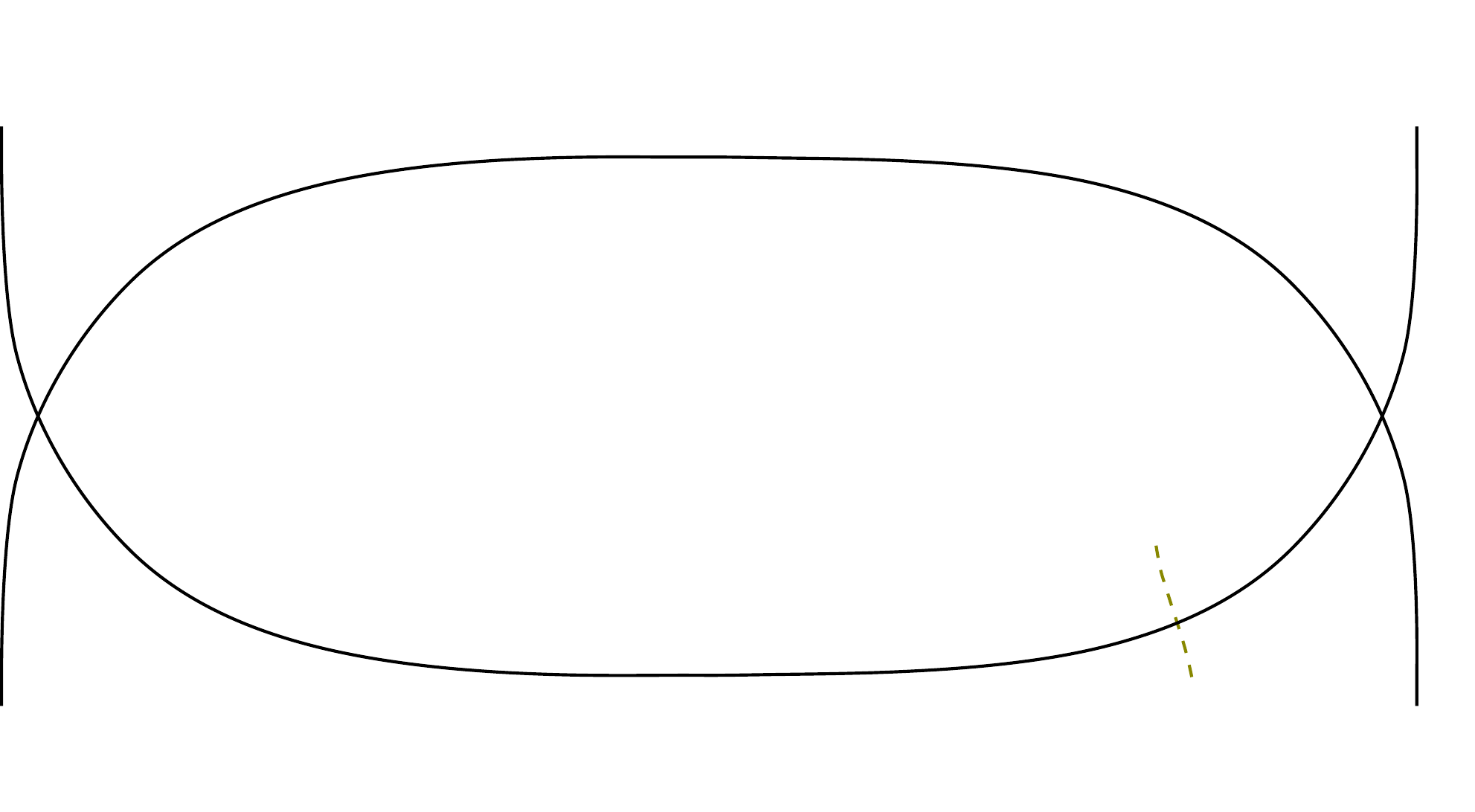}}%
    \put(0.96953692,0.41731741){\color[rgb]{0,0,0}\makebox(0,0)[lt]{\lineheight{0}\smash{\begin{tabular}[t]{l}$b$\end{tabular}}}}%
    \put(0.97029029,0.03113961){\color[rgb]{0,0,0}\makebox(0,0)[lt]{\lineheight{0}\smash{\begin{tabular}[t]{l}$a$\end{tabular}}}}%
    \put(0,0){\includegraphics[width=\unitlength,page=2]{c_cprime_intersection.pdf}}%
    \put(0.07738238,0.2141285){\color[rgb]{1,0,0}\makebox(0,0)[lt]{\lineheight{1.25}\smash{\begin{tabular}[t]{l}$c_0$\end{tabular}}}}%
    \put(0,0){\includegraphics[width=\unitlength,page=3]{c_cprime_intersection.pdf}}%
    \put(0.40184185,0.05581096){\color[rgb]{0.53333333,0.53333333,0}\makebox(0,0)[lt]{\lineheight{1.25}\smash{\begin{tabular}[t]{l}$c'$\end{tabular}}}}%
    \put(0.22060281,0.44530612){\color[rgb]{0,0,0}\makebox(0,0)[lt]{\lineheight{1.25}\smash{\begin{tabular}[t]{l}$y_1$\end{tabular}}}}%
    \put(0.80822269,0.414912){\color[rgb]{0,0,0}\makebox(0,0)[lt]{\lineheight{1.25}\smash{\begin{tabular}[t]{l}$y_n$\end{tabular}}}}%
  \end{picture}%
\endgroup%

\caption{A picture of $c_0$ and $c'$ and their points of intersection. The bicorn $c$ is oriented counterclockwise in this picture.}
\label{c_c'_intersection}
\end{figure}\end{proof}

\section{The case of genus one}
\label{genus1}

If $S$ is a finite type oriented surface of genus zero then the graph $\NS'(S)$ defined earlier is empty. If $S$ has genus one then $\NS'(S)$ is non-empty but disconnected (to see this, note that if two distinct nonseparating curves on $S$ are disjoint then they are homotopic after capping off all the boundary components and punctures). In this case we redefine $\NS'(S)$ so that two vertices are joined by an edge if they represent curves with intersection number at most one.

So let $S$ be a finite type surface of genus one. It remains to prove Theorem 1.1 for this graph $\NS'(S)$. We have the following modification of Lemma \ref{surg}:

\begin{lem}
For $a,b\in\NS'(S)^0$, $d_{\NS'(S)}(a,b)\leq 2i(a,b)+1$.
\label{surg1}
\end{lem}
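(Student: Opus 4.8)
The plan is to mimic the proof of Lemma \ref{surg} as closely as possible, adjusting only for the fact that in the genus-one graph two curves are adjacent precisely when they intersect at most \emph{once} (rather than at most twice), and that in genus one a once-punctured torus neighborhood has \emph{zero}-genus complement, so the base case must be handled differently. I would again induct on $i(a,b)$, performing surgery on $a$ along a subarc of $b$ between two consecutive intersection points, splitting into the two cases according to whether the tangent vectors to $a$ at these points are parallel or antiparallel along $b$.

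First I would dispose of the base case $i(a,b)=1$: here $a$ and $b$ are already adjacent by the new edge criterion, so $d(a,b)=1\leq 2i(a,b)+1$. The point of departure from Lemma \ref{surg} is that the old proof found a \emph{third} curve $c$ disjoint from $a\cup b$ in the complementary genus, which is unavailable when $\operatorname{genus}(S)=1$; but since intersection number one now suffices for adjacency, no such third curve is needed.

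For the inductive step with $i(a,b)\geq 2$, I would run the surgery exactly as before. In the parallel case one produces oriented curves $c_1,c_2$ with $[c_1]+[c_2]=[a]$, hence a nonseparating $c$ with $i(c,a)=1$ and $i(c,b)\leq i(a,b)-1$; since $i(c,a)=1$ means $c$ and $a$ are adjacent, $d(a,c)=1$ and by induction
\[
d(a,b)\leq d(a,c)+d(c,b)\leq 1 + \big(2i(c,b)+1\big) \leq 1 + 2\big(i(a,b)-1\big)+1 = 2i(a,b),
\]
which is within the claimed bound. In the antiparallel case one obtains a nonseparating $c$ with $i(a,c)=0$ and $i(c,b)\leq i(a,b)-2$; disjointness gives $d(a,c)=1$ as well, so
\[
d(a,b)\leq d(a,c)+d(c,b)\leq 1 + \big(2i(c,b)+1\big)\leq 1 + 2\big(i(a,b)-2\big)+1 = 2i(a,b)-2.
\]

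The one genuine obstacle is ensuring the surgered curves actually reduce intersection number with $b$ while remaining usable as graph vertices, and in particular confirming that a genus-one surface still supplies a nonseparating curve among $c_1,c_2$ (which holds because $[a]\neq 0$ forces $[c_1]\neq 0$ or $[c_2]\neq 0$, exactly as before). Since genus one means $H_1$ of the capped surface has the nonseparating curves concentrated in a single handle, I would double-check that the homological argument $[c_1]+[c_2]=[a]$ and the intersection-count bookkeeping go through verbatim; these are the same computations as in Lemma \ref{surg}, so I expect no new difficulty beyond the edge-criterion and base-case adjustments noted above.
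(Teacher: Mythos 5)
Your proposal is correct and follows the same route as the paper: the paper's proof of Lemma \ref{surg1} simply observes that the base case $i(a,b)=1$ now gives $d(a,b)=1$ by the redefined edge criterion, and that the inductive surgery argument of Lemma \ref{surg} goes through unmodified. Your additional observation that the adjacency criterion tightens the bound in the inductive step (giving $d(a,c)=1$ rather than $2$ when $i(a,c)=1$) is a harmless refinement of the same argument.
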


\begin{proof}
Again we prove the statement by induction on $i(a,b)$. In the base case $i(a,b)=1$ we have $d_{\NS'(S)}(a,b)=1$ by definition.

The rest of the proof goes through unmodified.\end{proof}

Once again we define $\NS(S)$ to have vertex set consisting of the free homotopy classes of simple nonseparating curves on $S$ with edges joining curves of intersection number at most 2. As before, Lemma \ref{surg1} proves that $\NS(S)$ and $\NS'(S)$ are quasi-isometric. Moreover, the proofs from section \ref{proof} go through unmodified, using Lemma \ref{surg1} in place of Lemma \ref{surg}.

\bibliographystyle{plain}
\bibliography{nsreferences}

\end{document}